\numberwithin{equation}{section}
\newtheorem{teo}{Theorem}[section]
\newtheorem{pro}[teo]{Proposition}
\newtheorem*{mainthm}{Main Theorem}
\newtheorem{cor}[teo]{Corollary}
\newtheorem{fact}[teo]{Fact}
\theoremstyle{definition}
\newtheorem{dfn}[teo]{Definition}
\newtheorem{exa}[teo]{Example}
\theoremstyle{remark}
\newtheorem{rem}[teo]{Remark}
\newcommand{\sX}{\mathscr{X}}
\newcommand{\bC}{\mathbb{C}}
\newcommand{\bP}{\mathbb{P}}
\newcommand{\sH}{\mathscr{H}}
 \newcommand{\Art}{\mathrm{Art}}
  \newcommand{\Spec}{\mathrm{Spec}}
\begin{document}
\bibliographystyle{amsalpha}

\title[Semi-stable reduction for ADE curves]{Simultaneous semi-stable reduction for curves with ADE singularities}

\author[S. Casalaina-Martin]{Sebastian Casalaina-Martin }
\address{University of Colorado at Boulder, Department of Mathematics,   Boulder, CO 80309}
\email{casa@math.colorado.edu}
\thanks{The  first author was partially supported by NSF grant DMS-1101333.}

\author[R. Laza]{Radu  Laza}
\address{Stony Brook University, Department of Mathematics,  Stony Brook, NY 11794}
\email{rlaza@math.sunysb.edu}
\thanks{The second author was partially supported by NSF grant DMS-0968968 and a Sloan Fellowship}


\begin{abstract} 
A key tool in the study of algebraic surfaces and their moduli is Brieskorn's simultaneous resolution for families of algebraic surfaces with simple (du Val or ADE) singularities.  In this paper we show that a similar statement holds for families of curves with at worst  simple (ADE)  singularities.  For a family $\mathscr X\to B$ of  ADE  curves, we give an explicit and natural resolution of the rational map $B\dashrightarrow \overline M_g$.  Moreover,  we discuss a lifting of this map to the moduli stack  $ \overline {\mathcal M}_g$, i.e. a simultaneous semi-stable reduction for the family $\mathscr X/B$.  In particular, we note that  in contrast to what might be expected from the case of surfaces, the natural Weyl cover of $B$ is not a sufficient base change for a lifting of the map $B\dashrightarrow \overline M_g$ to $\overline {\mathcal M}_g$.
\end{abstract}
\maketitle

\section*{Introduction}
As a consequence of the Deligne-Mumford  Stable Reduction Theorem \cite{dm}, many questions regarding curves and their moduli can be studied effectively by reducing to the case of  one parameter families of curves having at worst nodes as singularities.  Consequently, a great deal of research has been devoted to studying such families, and much is known. Recently there has been  a growing interest in understanding families of curves with singularities worse than nodes.  For instance, in a program started by B. Hassett and S. Keel (see \cite{hh}), moduli spaces of such curves have arisen naturally in the study of the canonical model of $\overline {M}_g$ (see also \cite{hh2}, \cite{hl} and \cite{smyth09}).  In another direction, such spaces have arisen  in the authors' study of compactifications of Kondo's ball quotient models of $M_3$ and $M_4$  (see \cite{k2} and \cite[\S8]{casalaza}).

With this as motivation,  it is natural to ask the following question: {\it Assuming that a moduli space $\overline {M}_g^T$ of curves with a given class of singularities were constructed, is it possible to understand in a systematic way the birational relationship between $\overline {M}_g^T$  and $\overline {M}_g$?} In this paper we answer this question in a situation that,  while restricted, often arises in practice.  Namely, we consider curves with simple (or ADE) singularities and focus on the local moduli space (mini-versal deformation space) instead of a global moduli space. The two restrictions are quite natural: ADE singularities, the most basic singularities beyond nodes, are also the first to be encountered in the Hassett-Keel program, and the local restriction avoids some subtle questions involved in the definition of a good moduli functor. 
 
The main result of this paper is an explicit construction of an  \'etale local resolution of the moduli map $B\dashrightarrow \overline{M}_g$ for a family of curves $\mathscr X\to B$ with  ADE  singularities. The process used to obtain the resolution is similar in spirit to Brieskorn's simultaneous resolution of  singularities for families of surfaces with  ADE  singularities (see \cite{briessing} and \cite{tjurina0}).

We state a concise version of our result below using the notion of the Weyl cover and wonderful blow-up. These  are explicit maps, described in detail in the body of the paper,  that are determined by the root systems associated to the singularities 
(Definitions \ref{dfnwc}, \ref{dfnclr}, and \ref{dfndc}).   A brief description is also given in the outline of the paper below.   Note that since we are considering the resolution question \'etale locally, it suffices to understand the case where $\mathscr X\to B$ is a mini-versal deformation of an  ADE   curve $X$.

\begin{mainthm}
Let $\mathscr X \to B_X$ be a mini-versal deformation of an  ADE   curve $X$ with $p_a(X)=g\ge 2$.  The wonderful blow-up of the Weyl cover of $B_X$ resolves the rational moduli map to the moduli scheme $\overline{M}_g$, but fails to resolve the rational moduli map to the moduli stack $\overline{\mathcal M}_g$ along the $A_{2n}$ locus of the discriminant ($n\in \mathbb N$).  
The addition of a stack structure (generically $\mathbb Z/2\mathbb Z$ stabilizers) along this locus resolves the moduli map to $\overline{\mathcal M}_g$.
\end{mainthm}

The theorem is proven, and stated more precisely, in Theorem \ref{teoresmm},  and  Corollaries \ref{teossr1}, \ref{teossr2}. In the text we also discuss the geometric meaning of the construction and its relationship to the Hassett-Keel program. In particular, we note that the results of Hassett \cite{hassettstable} are closely related to ours.  For instance, they can be used to describe the proper transform of the discriminant  $\Delta_X\subseteq B_X$ in $\overline M_g$ (see \S \ref{sechassettstable}).

We also point out that for a generically stable family of curves
$\mathscr X \to B$ inducing a rational map $f:B \dashrightarrow
\overline{\mathcal M}_g$, it is known there exists a generically finite
proper morphism $\tilde B \to B$ such  that the induced rational map $\tilde
B\dashrightarrow \overline{\mathcal M}_g$ extends to a morphism
(\cite[Theorem 5.8]{dejong1}, \cite[Theorem 2.7]{edidinetal}).  In
light of this, the content of our theorem is to give an explicit \'etale local description of such a  generically finite morphism $\tilde B\to B$ resolving the moduli map $B\dashrightarrow \overline {\mathcal M}_g$. This is important for the applications we have in mind (e.g. the study of the ball quotient model of $\overline{ M}_4$) and presumably for other questions related to the Hassett-Keel program.

\subsection*{Outline of the paper} We start in Section \ref{secdef}  by recalling some basic facts about the structure of the deformation space $B_X$ of a reduced Gorenstein curve $X$.  
This essentially allows us to study   the deformations of a complete curve
with ADE singularities using results on the deformations of the  singularities themselves.
Next, in Section \ref{sectcover}, using standard results from singularity theory, we get a good hold on both the deformation space and the discriminant of an ADE curve.  

 More precisely, the singularities of $X$ determine a Weyl group $W_X$: to each singularity of type  $A_n$, $D_m$, or $E_r$, one associates the corresponding Weyl group $W(A_n)$, $W(D_m)$, or $W(E_r)$ respectively, and  $W_X$ is the product of these groups.  Associated to this group is a finite \emph{Weyl cover} of the deformation space.  
 From a result of Brieskorn \cite{briessing} we obtain that the Weyl cover of the
deformation space is smooth,  and the pull-back of the discriminant is an arrangement of hyperplanes whose combinatorics are governed by the root system associated to the Weyl group.   

In \S \ref{secmssch}, we discuss the so-called {\it wonderful blow-up} introduced by de Concini-Procesi \cite{dcp} (see also \cite{macpherson}, \cite{hu}). Specifically, one blows-up an arrangement of hyperplanes inductively starting with the highest codimension (irreducible) linear strata. In our situation, by applying the Weyl cover followed by this blow-up, we modify the versal deformation family $\mathscr X\to B_X$ to obtain a family of curves over a base $\widetilde{B}_X$ with normal crossing discriminant. By applying general extension results of de Jong-Oort \cite{dejongoort} (see also \cite{cautis}, \cite{mochizuki}), we obtain that the \emph{a priori} rational map $\widetilde{B}_X\dashrightarrow  \overline{M}_g$ extends to a regular map to the coarse moduli space $\overline{M}_g$. We point out that in the analogous case of configuration space for points on the complex line, one obtains a similar result via the Fulton-MacPherson compactification (see \S\ref{confspace}).

For more subtle geometric questions it is important to lift the map $\tilde B_X\to \overline M_g$ to the stack $\overline{\mathcal M}_g$, i.e. to obtain a simultaneous stable reduction for families of curves with ADE singularities. We discuss this issue in \S \ref{secmstack}. In short, using the full strength of the de Jong-Oort theorem cited above, once we are in the normal-crossing case $(\widetilde B,\tilde \Delta)$, to get an extension to  $\overline{\mathcal M}_g$ it suffices to  establish the unipotency of certain monodromy representations at the generic points of the boundary divisors. Using results on Artin groups  (see \cite{brieskornartin})  and singularities, we establish the surprising fact that while the map $\widetilde{B} \dashrightarrow \overline{\mathcal M}_g$ extends along most of the boundary, there is an obstruction (due to the hyperelliptic involution of the tails) to extending the map along the loci in $\tilde \Delta$ corresponding to curves with $A_{2n}$ singularities.  

In \S \ref{secmstack} we discuss how this obstruction can be eliminated by adding an appropriate stack structure along the $A_{2n}$ boundary divisors.  To obtain a resolution in the category of schemes, one can consider introducing a level $n$ structure that will kill (for $n>2$) the monodromy obstruction (see \cite{llevel}, \cite[Theorem A(3)]{mochizuki}).
For clarity we discuss in Section \ref{secandirect} the explicit construction of the stable reduction in the $A_n$ case. This computation is essentially contained in \cite[\S3]{tjurina0}, \cite{bruber} 
(see also \cite[\S5]{maksym}).  Finally, some of the results of this paper can be extended to higher dimensional varieties and their period maps.  The authors are currently investigating applications to the period map for cubic threefolds.

\subsection*{Acknowledgements} 
While preparing this manuscript, we  learned that B. Hassett and P. Hacking (related to their work \cite{hassettstable} and \cite{hacking} respectively) were essentially aware of the semi-stable reduction process described in this paper. We are thankful for several discussions they   had with us on the subject. We are also grateful to B. Hassett for sharing with us some preliminary notes,  and encouraging our pursuit of the subject. We thank D. Allcock, M. Fedorchuk,  R. Friedman, D. Smyth and F. van der Wyck for helpful discussions on various specific points of the paper.  Finally, we would like to thank the MSRI for its hospitality during the preparation of this manuscript.

\subsection*{Notation and conventions} We work in the category of \textbf{schemes} of finite type over the complex numbers.  
A \textbf{curve} will be a reduced, connected, complete scheme  of pure dimension one (of finite type over $\mathbb C$). 
We will say a scheme $X$ has a \textbf{singularity of type $T$} at a point $x\in X$ if the completion of the local ring $\hat {\mathscr O}_{X,x}$ is isomorphic to the standard complete local ring with singularity of type $T$.  
The main focus will be on  curves with singularities of type $A_k$ ($k\ge 1$), $D_k$ ($k\ge 4$), $E_6$, $E_7$, and $E_8$.  A scheme  that is smooth or has singularities only of this type will be called an \textbf{ADE scheme}.
For example, we recall that  an $m$-dimensional $A_n$ singularity can be defined by the vanishing of a single polynomial of the form:
$$f_{A_n}= x_1^{n+1}+x_2^2+\ldots+x_{m+1}^2, \textrm{ for } n\ge 1.$$
Note that the index used in the notation of an ADE singularity is the Milnor number $\mu$ of the singularity (e.g. $\mu(E_7)=7$), and is known to be the dimension of a mini-versal deformation space of the singularity.

Two polynomials $f_1\in \mathbb C[x_1,\ldots,x_{m_1}]$ and $f_2\in \mathbb C[x_1,\ldots,x_{m_2}]$ are said to be \textbf{stably equivalent} (cf. \cite[p.187]{arnoldv1}) if  there exists an $m_3\ge \operatorname{max}(m_1,m_2)$ such that
$$
\frac{\mathbb C[[x_1,\ldots,x_{m_3}]]}{(f_1+x_{m_1+1}^2+\ldots+x_{m_3}^2)}\cong
\frac{\mathbb C[[x_1,\ldots,x_{m_3}]]}{(f_2+x_{m_2+1}^2+\ldots+x_{m_3}^2)}.
$$
Two hypersurface singularities are said to be stably equivalent  if they are defined by stably equivalent polynomials. 
A key fact is that the mini-versal deformation spaces (and the discriminants) for stably equivalent hypersurface singularities can be  identified (see also \S \ref{sdef3} below).

\section{Versal deformations of ADE curves}\label{secdef} The purpose of this section is to briefly discuss the following standard result that describes the deformation space of projective varieties with at worst isolated lci singularities:

\begin{fact}\label{cor1}
Let $X$ be a projective scheme with  singular locus consisting of exactly $n$ isolated local complete intersection (lci) singularities $x_1,\ldots,x_n$ of type $T_1,\ldots,T_n$ respectively.  If 
\begin{equation}\label{eqnmaincond}
h^2(X,\mathscr O_X)=h^2(X,\mathscr Hom(\Omega_X,\mathscr O_X))=0,
\end{equation} 
then $X$  admits a smooth mini-versal deformation space $B_X$, with divisorial discriminant $\Delta_X$.  
Moreover, setting
$B_{T_i}$ to be  a mini-versal deformation space of the singularity at $x_i$ for each $i$, then \'etale locally, 

\begin{equation}\label{eqnloc1}
B_X\cong_{\textnormal{\'et}} B_{T_1}\times \ldots \times B_{T_n}\times \mathbb A_{\mathbb C}^m
\end{equation}
for some $m$, and under this identification,  setting $\pi_i$ to be  the  projection onto the factor $B_{T_i}$, 

\begin{equation}\label{eqnloc2}
\Delta_X= \pi_1^*\Delta_{T_1}+ \ldots + \pi_n^*\Delta_{T_n}.
\end{equation} 
\end{fact}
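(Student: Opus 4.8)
The plan is to deduce the entire statement from the local-to-global spectral sequence for cotangent cohomology, using that an isolated lci singularity contributes only a skyscraper $T^1$ and no higher cotangent sheaf. Write $\mathscr T^q_X$ for the cotangent sheaves of $X$, so that $\mathscr T^0_X=\mathscr Hom(\Omega_X,\mathscr O_X)$ and $\mathscr T^1_X$ is the usual first cotangent sheaf, and let $T^i(X)$ denote the global cotangent cohomology, with $T^1(X)$ the space of first-order deformations of $X$ and $T^2(X)$ an obstruction space for the functor $\mathrm{Def}_X$. Because each $x_i$ is an isolated lci singularity, $\mathscr T^q_X=0$ for $q\ge 2$ and $\mathscr T^1_X$ is a skyscraper supported on $\{x_1,\dots,x_n\}$ whose stalk at $x_i$ is the local $T^1$ of the singularity $T_i$; for an ADE (indeed any isolated hypersurface) singularity this stalk has dimension $\mu(T_i)=\dim B_{T_i}$. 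Hence in $E_2^{p,q}=H^p(X,\mathscr T^q_X)\Rightarrow T^{p+q}(X)$ the only nonzero terms are $E_2^{p,0}=H^p(X,\mathscr Hom(\Omega_X,\mathscr O_X))$ and $E_2^{0,1}=\bigoplus_{i=1}^n T^1_{T_i}$, and the only differential that can be nonzero is $d_2\colon E_2^{0,1}\to E_2^{2,0}$.

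Next I would feed in the two vanishing hypotheses. Since $h^2(X,\mathscr Hom(\Omega_X,\mathscr O_X))=0$ we get $E_2^{2,0}=0$, so $d_2=0$, the spectral sequence degenerates at $E_2$, and $T^2(X)=0$; thus $\mathrm{Def}_X$ is unobstructed, and degeneration also yields a short exact sequence
$$0\longrightarrow H^1(X,\mathscr Hom(\Omega_X,\mathscr O_X))\longrightarrow T^1(X)\longrightarrow \bigoplus_{i=1}^n T^1_{T_i}\longrightarrow 0.$$
By Schlessinger's criterion $\mathrm{Def}_X$ admits a semi-universal (mini-versal) formal deformation, which, being unobstructed, is smooth of dimension $h^1(X,\mathscr Hom(\Omega_X,\mathscr O_X))+\sum_i\mu(T_i)$. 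The second hypothesis $h^2(X,\mathscr O_X)=0$ allows an ample line bundle on $X$ to be lifted along this formal family (the obstruction lies in $H^2(X,\mathscr O_X)$), so Grothendieck's existence theorem makes the formal deformation effective and Artin's algebraization theorem produces an actual smooth scheme-germ $B_X$ with a mini-versal family $\mathscr X\to B_X$. Shrinking $B_X$ around the point $0$ representing $X$ if necessary, the non-smooth fibres form a proper closed subset $\Delta_X$: the singular locus of $\mathscr X$ is proper over $B_X$ and meets $\mathscr X_0$ only at the $x_i$, hence stays in small neighbourhoods of the $x_i$ over a small neighbourhood of $0$, while near each $x_i$ the generic deformation of $T_i$ is smooth. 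Set $m:=h^1(X,\mathscr Hom(\Omega_X,\mathscr O_X))$.

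For the product structure \eqref{eqnloc1} I would use the restriction morphism of deformation functors $\rho\colon \mathrm{Def}_X\to \prod_{i=1}^n\mathrm{Def}_{T_i}$, which sends a deformation of $X$ to the induced deformations of the formal neighbourhoods of its singular points; on tangent spaces $\rho$ is precisely the surjection in the displayed sequence. Both functors have smooth hulls (the target because each $T_i$ is lci, so $\mathrm{Def}_{T_i}$ is unobstructed with smooth mini-versal base $B_{T_i}$ of dimension $\mu(T_i)$), and a morphism of deformation functors with smooth hulls that is surjective on tangent spaces is smooth, here of relative dimension $\dim T^1(X)-\sum_i\mu(T_i)=m$. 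A smooth morphism of smooth germs of relative dimension $m$ is, \'etale-locally on the source, \'etale over the projection $(\prod_i B_{T_i})\times\bA^m\to\prod_i B_{T_i}$; since the right-hand space is a smooth affine space, this gives the \'etale-local isomorphism \eqref{eqnloc1}, with $\pi_i$ the composite of this isomorphism with projection to $B_{T_i}$, that is, a local lift of $\rho$ followed by the $i$-th projection.

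The remaining point, the shape of the discriminant \eqref{eqnloc2}, is the step I would expect to demand the most care, since it is about matching divisor structures rather than just underlying sets. For $b$ near $0$, by the properness remark $\mathscr X_b$ is smooth away from arbitrarily small neighbourhoods of the $x_i$, and near $x_i$ it is the deformation of $T_i$ classified by $\pi_i(b)$; hence $\mathscr X_b$ is singular exactly when $\pi_i(b)\in\Delta_{T_i}$ for some $i$, so as a set $\Delta_X=\bigcup_{i=1}^n\pi_i^{-1}(\Delta_{T_i})$. Under \eqref{eqnloc1} the map $\pi_i$ is, up to the \'etale base change, the projection from the product onto $B_{T_i}$, in particular flat with smooth fibres, and by standard singularity theory the discriminant $\Delta_{T_i}$ of the mini-versal deformation of an ADE singularity is a reduced divisor in $B_{T_i}$. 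Consequently $\pi_i^*\Delta_{T_i}$ is a reduced divisor, and for $i\ne j$ the divisors $\pi_i^*\Delta_{T_i}$ and $\pi_j^*\Delta_{T_j}$ share no component, being pulled back from distinct factors of the product; therefore $\bigcup_i\pi_i^{-1}(\Delta_{T_i})$, with its reduced structure, is the effective divisor $\pi_1^*\Delta_{T_1}+\dots+\pi_n^*\Delta_{T_n}$. This identifies $\Delta_X$ with that divisor, in particular shows $\Delta_X$ is divisorial, and gives \eqref{eqnloc2}. Beyond this bookkeeping there is no genuine obstacle: the two cohomological hypotheses were chosen precisely so that the spectral sequence degenerates and $\rho$ is smooth, which is why the statement is indeed standard.
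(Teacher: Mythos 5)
Your proposal is correct and follows essentially the same route as the paper's own (sketched) argument: the local-to-global spectral sequence for the cotangent complex, with $h^2(\mathscr O_X)=0$ feeding Grothendieck effectivity plus Artin algebraization and $h^2(\mathscr Hom(\Omega_X,\mathscr O_X))=0$ giving smoothness of $\operatorname{Def}_X\to\prod_i\operatorname{Def}_{T_i}$, combined with the smoothness and divisorial discriminant of the local lci deformation spaces. You supply more detail than the paper does on the last step (the divisor-level identification of $\Delta_X$ with $\sum_i\pi_i^*\Delta_{T_i}$), which the paper leaves to the reader, but the underlying argument is the same.
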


For reduced curves the two conditions in \eqref{eqnmaincond} are automatically satisfied. Additionally, we are interested in the very restricted case where $X$ has at worst ADE (and thus lci) singularities. Consequently, the above result applies for $X$ an ADE curve. In this situation, we will denote by $\sX\to B_X$ and $\Delta_X$ the mini-versal deformation of $X$ and the discriminant respectively.
For completeness, we briefly recall below the basic ingredients of Fact \ref{cor1}.

\subsection{Existence of mini-versal deformations}\label{sdef1}  The literature on deformations is extensive; we refer the reader to Sernesi \cite{sernesi} as a compendium of most of the standard results cited in this section.  
To begin, a general result in deformation theory (e.g. \cite[Corollary 2.4.2]{sernesi}) states that for a projective scheme $X$ with at most isolated singularities, the deformation functor $\operatorname{Def}_X$ has a semi-universal formal element (see \cite[Definition 2.2.6]{sernesi}); this is sometimes referred to more descriptively as a minimal formally-versal formal deformation.  Via a theorem of Grothendieck \cite{groth} (e.g. \cite[Theorem 2.5.13]{sernesi}), the condition that $h^2(X,\mathscr O_X)=0$ implies that every formal deformation of $X$ is effective  (see \cite[Definition 2.5.10]{sernesi}).  Artin's algebraization theorem \cite{aa} (e.g.  \cite[Theorem 2.5.14]{sernesi}) then implies that $X$ has a formally semi-universal  algebraic  deformation  (see \cite[Definition 2.5.9]{sernesi}), which is to say, $X$ has a mini-versal deformation space. We conclude that \emph{a projective scheme with at most isolated singular points, and $h^2(X,\mathscr O_X)=0$,  admits a mini-versal deformation space.}

\begin{rem}\label{remexistvers}
For curves, another argument is possible via Hilbert schemes, following the approach taken in Koll\'ar \cite[Theorem II 1.11]{kollar}. One obtains stronger algebraic properties this way than by applying the general theory outlined above.  
\end{rem} 

\subsection{Local versus global deformations}\label{sdef2}  
The global deformations of a reduced scheme $X$ are related to the deformations of the singularities by the local-to-global spectral sequence for $\mathrm{Ext}$ (see Sernesi \cite[Thm. 2.4.1(iv), Proposition 2.3.6]{sernesi}). In the situation of isolated singularities, this is easy to describe. 

Namely, let $X$ be a reduced, projective, lci scheme with singular locus consisting of exactly $n$ isolated singularities $x_1,\ldots,x_n\in X$ of type $T_1,\ldots,T_n$ respectively.  For the deformation functor $\operatorname{Def}_X$ there is a deformation obstruction theory with tangent space $\operatorname{Ext}^1(\Omega_X,\mathscr O_X)$ and obstruction space
$\operatorname{Ext}^2(\Omega_X,\mathscr O_X)$ (e.g. \cite[Theorem 2.4.1, Proposition 2.4.8]{sernesi}).  In addition, the deformation functor of the singularities, $\prod_{i=1}^n\operatorname{Def}_{X_{T_i}}$, has a deformation obstruction theory with tangent space given by $H^0(\mathscr Ext^1(\Omega_X,\mathscr O_X))$ and obstruction space
$H^0(\mathscr Ext^2(\Omega_X,\mathscr O_X))$ (e.g. \cite[\S3]{sernesi}). It then follows easily from a local-to-global spectral sequence argument that \emph{if  $h^2(X,\mathscr Hom(\Omega_X,\mathscr O_X))=0$, then the natural map 
$\operatorname{Def}_{X}\to \prod_{i=1}^n\operatorname{Def}_{X_{T_i}}$ 
is smooth}. 
\subsection{Deformations of lci singularities}\label{sdef3}  
To conclude we note that for isolated 
lci  singularities the deformation theory is very well understood. We refer the reader to \cite[\S 7]{vistolici}, where it is shown that given  an isolated lci singularity $X$, there exists a mini-versal deformation $\pi:\mathscr X\to B_X$, which is a relative complete intersection morphism (see \cite[Definition D.2.1]{sernesi}) with smooth base $B_X$.  
It is not hard to show in this case that the discriminant $\Delta_X$ is a divisor.  Note that from the description in \cite{vistolici}, one can easily establish that two stably equivalent hypersurface singularities have isomorphic mini-versal deformation spaces (and discriminants).

\section{Weyl covers and hyperplane arrangements}\label{sectcover}
As discussed in the previous section, to understand the deformations of an ADE curve, it suffices to understand the local deformations of these singularities. The purpose of this section is to recall and discuss the following well known fact (see \S\ref{secwey}  for some references):

\begin{fact}\label{fact21}
Let $X$ be an ADE singularity of type $T$.  Let $B_T$ be a mini-versal deformation of $X$ with discriminant $\Delta_T$.  Define $W_T$ to be the Weyl group of type $T$ and $R_T$ be the corresponding root system.  Then there exists a Galois cover $f:B_T'\to B_T$ with covering group $W_T$ and ramification locus $\Delta_T$ such that $f^*\Delta_T$ is an arrangement of hyperplanes determined by the root system $R_T$.  The hyperplanes are in one-to-one correspondence with the roots in $R_T$ considered up to $\pm 1$.
\end{fact}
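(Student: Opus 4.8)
The plan is to reduce the assertion to the classical case of a two-dimensional ADE singularity and then to quote Brieskorn's Lie-theoretic description of its mini-versal deformation. First I would use the stable equivalence of hypersurface singularities recalled in \S\ref{sdef3}: the curve singularity cut out by $f_T$ is stably equivalent to the rational double point cut out by $f_T+z^2$, and stably equivalent hypersurface singularities have identified mini-versal deformation spaces and discriminants. It therefore suffices to prove Fact~\ref{fact21} when $X$ is the two-dimensional simple singularity of type $T$.

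Next I would invoke Brieskorn's theorem (with contributions of Grothendieck, Tjurina and Slodowy; see \cite{briessing}, \cite{tjurina0} and the references in \S\ref{secwey}). Let $\mathfrak g$ be the complex simple Lie algebra of type $T$, with Cartan subalgebra $\mathfrak h$, root system $R_T$, Weyl group $W_T=W(T)$ acting on $\mathfrak h$, reflection hyperplanes $H_\alpha=\ker\alpha$ for $\alpha\in R_T$, and adjoint quotient $\chi\colon\mathfrak g\to\mathfrak h/W_T$. Brieskorn's theorem states that the restriction of $\chi$ to a transverse slice through a subregular nilpotent element is a mini-versal deformation of the surface singularity of type $T$, and that the discriminant of this family is the image in $\mathfrak h/W_T$ of $\bigcup_{\alpha\in R_T}H_\alpha$. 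Since a mini-versal deformation space is unique up to isomorphism and its discriminant is intrinsic, we obtain identifications $B_T\cong\mathfrak h/W_T$ and, compatibly, $\Delta_T$ with the image of $\bigcup_\alpha H_\alpha$. By the Chevalley–Shephard–Todd theorem $\mathfrak h/W_T\cong\mathbb A^{\ell}$, $\ell=\operatorname{rank}(R_T)=\mu(T)$, so $B_T$ is smooth, in agreement with \S\ref{sdef3}.

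It remains to exhibit the cover. I would take $B_T':=\mathfrak h\cong\mathbb A^{\ell}$ with $f\colon B_T'\to B_T$ the quotient map by $W_T$. This is finite of degree $|W_T|$, and since $W_T$ is a reflection group it is étale exactly over the complement of its branch divisor; that branch divisor is precisely $\Delta_T$. Hence $f$ is a Galois cover with group $W_T$ and ramification locus $\Delta_T$. Finally, set-theoretically $f^{-1}(\Delta_T)=\bigcup_{\alpha\in R_T}H_\alpha$, and since $H_\alpha=H_{-\alpha}$ while no other coincidences occur in a reduced root system, the distinct hyperplanes of this arrangement are in bijection with $R_T/\{\pm1\}$. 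This is the final assertion. (As a Cartier divisor $f^{*}\Delta_T=2\sum_{\alpha>0}H_\alpha$, because the $W_T$-discriminant pulls back to the square of the Jacobian $\prod_{\alpha>0}\alpha$ of a system of basic invariants; only the reduced arrangement is needed here.)

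The main obstacle is the bookkeeping underlying the appeal to Brieskorn's theorem rather than any computation: one needs the dictionary matching the minimal resolution graph of the type-$T$ rational double point with the Dynkin diagram of type $T$ (the McKay correspondence for finite subgroups of $\mathrm{SL}_2$), and one must know that Brieskorn's slice is genuinely mini-versal — equivalently, that its Kodaira–Spencer map is an isomorphism — so that the abstract identification $B_T\cong\mathfrak h/W_T$ is canonical enough to transport $\Delta_T$ and its $W_T$-cover. For the classical types this can be verified directly without Lie theory: for $A_n$ the slice deformation is $y^2+x^{n+1}+a_{n-1}x^{n-1}+\cdots+a_0$, whose Weyl cover is the space of monic polynomials of degree $n+1$ with vanishing $x^n$-coefficient, presented via their roots $t_1,\dots,t_{n+1}$ subject to $\sum t_i=0$; here $W(A_n)=S_{n+1}$ permutes the $t_i$ and $f^{-1}(\Delta_{A_n})=\bigcup_{i<j}\{t_i=t_j\}$ is the braid arrangement. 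An analogous explicit model is available for $D_m$, providing a consistency check, whereas $E_6, E_7, E_8$ genuinely require Brieskorn's theorem or an equivalent explicit computation.
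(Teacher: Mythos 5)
Your proposal is correct and follows essentially the same route as the paper: reduce to the two-dimensional simple singularity via stable equivalence (\S\ref{sdef3}), invoke Brieskorn's theorem to identify $B_T$ with $\mathfrak h/W_T$ and $\Delta_T$ with the image of the reflection arrangement, and take the Chevalley quotient $\mathfrak h\to\mathfrak h/W_T$ as the Weyl cover, with hyperplanes $H_\alpha=H_{-\alpha}$ indexed by roots up to sign. The extra Lie-theoretic detail (adjoint quotient, subregular slice) and the explicit $A_n$ check are consistent with the paper's \S\ref{secwey} and \S\ref{secandirect}.
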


Using  Fact \ref{cor1} and Fact \ref{fact21} we make the following definition that will be relevant in later sections.

\begin{dfn}[Weyl cover]\label{dfnwc}
Let $X$ be a projective  ADE scheme satisfying \eqref{eqnmaincond}, with exactly $n$ singularities $x_1,\ldots,x_n$ of type $T_1,\ldots,T_n$ respectively. Let $B_X$ be  a  mini-versal deformation with discriminant $\Delta_X$.  Define $W_X=\prod_{i=1}^n W_{T_i}$ to be the product of the groups associated to the singularities of $X$, and let $R_X$ be the corresponding root system. We will refer to $W_X$ (resp. $R_X$) as the \textbf{Weyl group (resp. root system) associated to $X$}.  Then there exists a Galois cover 
\begin{equation}\label{eqndefwc}
f:B_X'\to B_X
\end{equation}
 with covering group $W_X$ and branch  locus $\Delta_X$ such that $f^*\Delta_X$ is an arrangement of hyperplanes determined by the root system $R_X$. Furthermore, the hyperplanes are in one-to-one correspondence with the roots in $R_X$ considered up to $\pm 1$.  The morphism 
\eqref{eqndefwc} will be called the \textbf{Weyl cover associated to $X$}.
\end{dfn}

\subsection{The Weyl cover}\label{secwey}  In the case of surface singularities, 
Fact \ref{fact21} is due to Brieskorn \cite{briessing} (see also Tjurina  \cite{tjurina0}, Artin \cite{artinsurf}, Pinkham \cite[Th\'eor\`eme principal, p.188]{pink} and  Arnold et al. \cite[II Theorem 3.8]{arnold}).    The general case is obtained via stabilization of the singularity (i.e. the deformation spaces and discriminants of stably equivalent hypersurface singularities are naturally identified; thus the surface case suffices for ADE singularities).  

In concrete terms, the Weyl covering \eqref{eqndefwc} of type $T$ is given by Chevalley's Theorem,  
which says that the subalgebra of $W_T$-invariant polynomials is  a polynomial ring itself; i.e.  \eqref{eqndefwc} is given by 
$$
B'=\Spec\left( \mathbb C[x_1,\dots,x_n]\right)\to B=\Spec\left( \mathbb C[x_1,\dots,x_n]^{W_T}\right).
$$ 
A Weyl group is  a finite reflection group; the hyperplane arrangement $\sH$ is simply the set of reflection hyperplanes. It is clear that $\sum_{H\in \sH} H$ is the ramification divisor of $f:B'\to B$, and that $f_*\sum_{H\in \sH} H=\left(\sum_{H\in \sH} H\right)/W$ is the branch divisor.  Brieskorn's theorem (\cite{briessing}) asserts that this branch divisor is the discriminant in a mini-versal deformation space of a singularity of type $T$ (for an explicit discussion in the $A_n$ case see Section \ref{secandirect}).

\subsection{Root systems and hyperplane arrangements} There is associated to a Weyl group $W_T$ a corresponding root system $R_T$  spanning a real vector space $V^\vee$.  This defines a  hyperplane arrangement $\mathscr H_{\mathbb R}$ in $V$.  The hyperplanes are in one-to-one correspondence with the positive roots of $R$ via the assignment $\alpha \mapsto \alpha^\perp = H_{\pm \alpha}:=\{v\in V:  \alpha(v)=0\}$.  The hyperplane arrangement $\sH$ in the Weyl cover of the mini-versal deformation space of a singularity of type ADE is easily seen to be the complexification of  the real hyperplane arrangement associated to a root system $R_T$.  The hyperplane arrangements associated to the root systems of  Weyl groups are called  \textbf{Weyl arrangements} (or more generally Coxeter arrangements), and  are well studied in the literature (see esp. Orlik--Solomon \cite{coxeterhyp}).

\subsection{Parabolic sub-root systems and the  stratification of the Weyl arrangement}\label{sstratification} 
 In Section \ref{secmssch}, we will be interested in the possible intersections of hyperplanes from the arrangement $\mathscr H$ (or equivalently $\mathscr H_{\mathbb R}$) and the resulting stratification of the discriminant.  This type of  question has been considered, in the context of simultaneous resolution of surface singularities, for a broader class of singularities by Wahl \cite{wahlduke}.
 In the case of ADE singularities, the stratification can be easily described in terms of the corresponding root systems. 
 For this purpose, as is customary, we let $L(\sH)$ be the partially ordered set of  intersections of hyperplanes from $\sH$, with the order given by reverse inclusion.
The action of the Weyl group $W$ on the root system $R$ induces an action on $L(\sH)$ compatible with the ordering.

It is easy to see that by associating to $Z\in L(\sH)$ the subroot system $R_Z=R\cap Z^\perp$ one obtains a bijection between $L(\sH)$ and the set of parabolic  subroot systems of $R$ (e.g. \cite[VI Prop. 24]{bourbaki}).
 Clearly, the codimension of $Z$ is equal to the rank of $R_Z$. Furthermore, two strata $Z$ and $Z'$ are $W$-equivalent iff the corresponding subroot systems $R_Z$ and $R_{Z'}$ are conjugate by the Weyl group $W$($=W(R)$). Thus,  if $W_Z=W(R_Z)$ denotes the Weyl group associated to a stratum $Z\in L(\sH)$, then the number of strata $W$-equivalent to $Z$ is equal to 
$$|W:N_W(W_Z)|$$
where $N_W(W_Z)$ is the normalizer of $W_Z$ in $W$ (e.g. \cite[(3.4)]{coxeterhyp}). Explicit computations of the possible strata and their numbers in the case of root systems of small rank, including all exceptional cases, are worked out in Orlik--Solomon \cite[Tables I, VI-VIII]{coxeterhyp} (see also Carter \cite{carter}).

\begin{exa}\label{exawce6}
Consider the case of a hyperplane arrangement associated to an $E_6$ root system.   There is a unique zero dimensional stratum corresponding to the entire $E_6$ root system, and from  Table I on p. 284 of \cite{coxeterhyp}, the one dimensional strata are described as follows:  there are $27$ strata associated to root systems of type $D_5$, there are $36$ strata associated to root systems of type $A_5$, there are $216$ strata associated to root systems of type $A_1\times A_4$, and there are $360$ strata associated to root systems of type $A_1\times A_2\times A_2$. 
\end{exa}

Finally, we note the following combinatorial description of the strata $Z\in L(\sH)$ given in terms of Dynkin diagrams: 
\emph{there exists a 
stratum $Z\in L(\sH)$ associated to a parabolic subroot system $R'$ of type $T'$ if and only if there exists a full subdiagram of type $T'$ of the Dynkin diagram associated to $R$}.
  The ADE case, the only one we will need in this paper,  is due to  Grothendieck (see Demazure \cite[2.11 p.137]{demazure}, Arnold et al. \cite[Theorem p.132]{arnoldsum}).
Recall that a (possibly disconnected) sub-diagram $D'$ of a Dynkin diagram $D$ is said to be full if it is obtained from $D$ by removing some vertices  together with all of the edges terminating in those vertices. For example, for the $E_6$ hyperplane arrangement, the diagram:
$$\xymatrix@R=.25cm{
{\bullet}\ar@{-}[r]&{\bullet}\ar@{--}[r]&{\circ}\ar@{--}[r]\ar@{--}[dd]&{\bullet}\ar@{-}[r]&{\bullet}\\
                                   &                   &                                              &  \\
                        &                   &{\bullet}                     &                   
}$$ 
shows that there is a dimension $1$ stratum $Z$  (unique up to the action of $W(E_6)$) corresponding to a subroot system $R_Z$ of type $A_1\times A_2\times A_2$. The number of strata conjugate to $Z$ in $L(\sH)$ is $|W(E_6):N_{W(E_6)}(W(A_1\times A_2\times A_2))|=360$ (e.g. \cite[Table 9]{carter}).

\subsection{Connection with deformations}\label{seccomb}
Let $\mathscr X_T\to B_T$ be the versal deformation of a singularity of type $T$, with Weyl cover $B_T'\to B_T$. It is well known that the stratification of the hyperplane arrangement $\sH$ as discussed above is related to  the singularities of the fibers as follows: let $z\in Z$ be a point in the stratum $Z$ not contained in any stratum of lower dimension.  The stratum $Z$ corresponds to a parabolic subroot system of type $R_Z$ if and only if the fiber of $B_T'\times_{B_T} \mathscr X_T$ over $z$ has singularities of type $R_Z$ (e.g. Pinkham \cite[Th\'eor\`eme 2 p.189]{pink}).  For example,  in the Weyl cover of the deformation space of an $E_6$ singularity, there will be $1$-dimensional strata corresponding to   parabolic subroot systems of type $A_2\times A_2\times A_1$. Over such a stratum, the corresponding fibers will have as singularities precisely two cusps and one node.

\section{Wonderful blow-ups and morphisms to the moduli scheme $\overline M_g$}\label{secmssch}
Given a versal deformation $\mathscr X\to B_X$ of a curve with  ADE  singularities, one obtains a natural rational moduli map $B_X\dashrightarrow  \overline{M}_g$. In this section we construct an explicit resolution of this map (see Theorem  \ref{teoresmm}), with certain canonical properties. The two main ingredients used in the construction are the ``wonderful'' blow-up  of de Concini-Procesi \cite{dcp} and  an extension theorem of de Jong-Oort \cite{dejongoort}.  

\subsection{Wonderful blow-ups and the log alteration of the discriminant} The discriminant $\Delta_X$ plays a  key role in resolving the moduli map $B_X\dashrightarrow  \overline{M}_g$. As noted in the previous section,  by passing to a finite cover $B'_X$, we can assume that the discriminant is a hyperplane arrangement.  At this point we can employ 
the \emph{wonderful blow-up} of  de Concini-Procesi \cite{dcp},  a  procedure, with some canonical properties, for
replacing the hyperplane arrangement by a divisor in a simple normal crossings configuration.  We will follow here the presentation given in   MacPherson-Procesi \cite{macpherson} as it is more suited to our situation of hyperplane arrangements associated to root systems.

Recall that we are considering an arrangement of hyperplanes $\mathscr H$  in a vector space $V\otimes_{\mathbb R}\mathbb C=V_{\mathbb C}$  determined by a root system $R\subset V^\vee$.  The hyperplane arrangement determines a stratification of $V_{\mathbb C}$ with strata corresponding to parabolic subroot systems (see \S\ref{sstratification}).  We will say that a stratum is an \textbf{irreducible stratum} if it corresponds to an irreducible (parabolic) sub-root system of $R$.  One easily checks that, in the notation of MacPherson-Procesi \cite{macpherson}, this stratification is a \emph{conical} stratification of $V_{\mathbb C}$, and the irreducible strata defined here correspond to the irreducible conical strata in their notation.  

The \textbf{(minimal) wonderful blow-up} 
$\tilde {V}_{\mathbb C}$ of $V_{\mathbb C}$ associated to the arrangement  is defined to be the result of repeatedly blowing up the (closure of the strict transform of)  irreducible strata of minimal dimension, until they are all  of codimension $1$ (see  \cite[Def. p.132]{macpherson}). The key results we will use are:

\begin{enumerate}
\item   The wonderful blow-up is well defined up to   isomorphism \cite[Prop. 2 on p.132]{macpherson}.
\item  The wonderful blow-up is smooth   \cite[Prop. on p. 131]{macpherson}.
\item The pull-back of the hyperplane arrangement is an SNC divisor 
\cite[Prop. 1 on p. 132]{macpherson}; we will refer to it also as the \emph{boundary divisor}.
\item  The irreducible components of the boundary divisor of the wonderful blow-up are 
indexed by the irreducible strata associated to the arrangement.  Moreover, each irreducible divisor is the closure of the inverse image of the corresponding irreducible stratum  \cite[Thm. (1) on p. 134]{macpherson}. 
\end{enumerate}
In short, the wonderful blow-up is a log resolution of the pair $(V_{\mathbb C},\mathscr H)$. Furthermore, in our situation,  the components of the boundary divisor are indexed by the irreducible parabolic  sub-root systems $R'$ of $R$. 

\begin{rem}
 We emphasize that there is no assumption that $R$ be irreducible in the wonderful blow-up construction.   Moreover, the cases where $R$ is reducible can be easily understood in terms of the irreducible cases (see the discussion of ``product stratification'' from \cite[p. 126--127]{macpherson}).
 \end{rem}

Returning to our situation, deformations of ADE curves, we define:

\begin{dfn}[Canonical log alteration]\label{dfnclr}
Let $B_X$ be a mini-versal deformation space of a projective ADE scheme $X$ satisfying \eqref{eqnmaincond}, with discriminant $\Delta_X$.  The  \textbf{canonical log alteration} of $(B_X,\Delta_X)$ is the wonderful blow-up of the  Weyl cover $B'_X$ of $B_X$ (w.r.t. the root system $R_X$ and the corresponding Weyl arrangement).  We will denote this space and the pull-back of the discriminant by $\tilde B_X$ and $\tilde \Delta_X$ respectively.
\end{dfn}

We conclude:

\begin{pro}\label{proresdisc}
Let $B_X$ be a mini-versal deformation space of a projective  ADE  scheme $X$ satisfying \eqref{eqnmaincond}, and let $\Delta_X\subseteq B_X$ be the discriminant.   
The canonical log alteration
\begin{equation}\label{eqnres}
(\tilde B_X,\tilde \Delta_X) \to (B_X', \Delta_X')\to (B_X,\Delta_X)
\end{equation}
of the pair $(B_X,\Delta_X)$ consists of a smooth variety $\tilde B_X$ and an SNC divisor $\tilde \Delta_X$.  The composite map $\tilde B_X\to B_X$ is proper, generically finite,
and $W_X$-equivariant with respect to the natural $W_X$-action on $\tilde B_X$. 

The irreducible components of $\tilde \Delta_X$ are indexed by the irreducible parabolic sub-root systems of $R_X$, and consequently,  the generic point of an irreducible component of $\tilde \Delta_X$ corresponds to a projective ADE scheme  with a unique singularity. 
\end{pro}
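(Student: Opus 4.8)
The plan is to assemble Proposition~\ref{proresdisc} from the pieces already in hand: Fact~\ref{cor1} and the Weyl cover of Definition~\ref{dfnwc} for the first modification, then the four enumerated properties of the wonderful blow-up of MacPherson--Procesi \cite{macpherson} for the second, and finally the dictionary of \S\ref{seccomb} between strata and singularities of the fibers for the last sentence. Concretely, I would split the statement into three assertions and treat them in turn: (a) $\tilde B_X$ is smooth and $\tilde\Delta_X$ is SNC; (b) $\tilde B_X\to B_X$ is proper, generically finite, and $W_X$-equivariant; (c) the irreducible components of $\tilde\Delta_X$ are indexed by the irreducible parabolic sub-root systems of $R_X$, and the generic point of each corresponds to an ADE scheme with a single singularity.

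For (a): by Fact~\ref{cor1} the base $B_X$ is smooth with divisorial discriminant $\Delta_X$, and by Definition~\ref{dfnwc} (i.e. Fact~\ref{fact21} applied to the product $W_X=\prod W_{T_i}$, using stable equivalence to reduce each factor to the surface case of Brieskorn) the cover $B_X'$ is smooth and $\Delta_X'=f^*\Delta_X$ is the Weyl hyperplane arrangement $\mathscr H=\mathscr H_{R_X}$. Since this arrangement is a conical stratification in the sense of \cite{macpherson}, property (2) gives smoothness of the wonderful blow-up $\tilde B_X$ and property (3) gives that $\tilde\Delta_X$, the total transform of $\mathscr H$, is SNC; property (1) ensures this is well-defined up to isomorphism. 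For (b): the composite $\tilde B_X\to B_X'\to B_X$ is a composition of a sequence of blow-ups (hence projective, in particular proper) with a finite map, so it is proper; it is generically finite since the blow-up centers are proper subvarieties, so the map is an isomorphism over a dense open subset of $B_X'$. For $W_X$-equivariance, I would observe that $W_X$ acts on $B_X'$ preserving $\mathscr H$ (it permutes the reflection hyperplanes), hence preserves the stratification by parabolic sub-root systems and in particular the set of irreducible strata of each dimension; since the wonderful blow-up is built canonically from this $W_X$-stable data, the $W_X$-action lifts to $\tilde B_X$, and equivariance of $\tilde B_X\to B_X'$ (hence of $\tilde B_X\to B_X$, as $f$ is $W_X$-invariant) follows from the uniqueness in property (1), applied equivariantly.

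For (c): property (4) says the irreducible components of the boundary divisor are indexed by the irreducible conical strata of the arrangement, which in our setting (see the sentence after the enumerated list, and \S\ref{sstratification}) are exactly the irreducible parabolic sub-root systems $R'\subseteq R_X$; the combinatorial Dynkin-diagram criterion of Grothendieck recalled in \S\ref{sstratification} guarantees that these strata actually occur. It remains to identify the geometry of the generic fiber over such a component. By property (4) again, the divisor indexed by an irreducible $R'$ is the closure of the inverse image of the corresponding irreducible stratum $Z$ with $R_Z=R'$, so a generic point of the component maps (under $\tilde B_X\to B_X'$) to a generic point $z$ of $Z$, i.e. one lying in no smaller stratum. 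By the connection with deformations in \S\ref{seccomb} (Pinkham), the fiber of $\mathscr X\times_{B_X}B_X'$ over $z$ has singularities of type $R_Z=R'$; since $R'$ is irreducible, this is a single ADE singularity, so the corresponding fiber of $\mathscr X/B_X$ is a projective ADE scheme (a curve, in the case of interest) with exactly one singular point, of type $R'$.

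The routine parts here are (a) and the properness/generic finiteness in (b); the one genuinely non-formal point is $W_X$-equivariance of the wonderful blow-up. The cleanest argument is that the blow-up is constructed canonically from the stratification poset $L(\mathscr H)$ (blow up closures of strict transforms of irreducible strata in order of increasing dimension), and the $W_X$-action on $V_{\mathbb C}$ permutes the irreducible strata of each given dimension; hence at each stage the blow-up center is a $W_X$-invariant closed subscheme and the action lifts, so by induction (and property (1) for well-definedness) one obtains a $W_X$-action on $\tilde B_X$ making the tower $W_X$-equivariant. I expect this equivariance bookkeeping --- making sure the blow-up at each stage is of a $W_X$-stable center and that the lifted action is compatible with the previous stages --- to be the main (though still routine) obstacle; everything else is a direct citation of the results quoted in \S\S\ref{secdef}--\ref{secmssch}.
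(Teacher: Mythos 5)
Your proposal is correct and follows essentially the same route as the paper: the paper's proof consists of the single observation that the $W_X$-invariance of the stratification of $\mathscr H$ lets the $W_X$-action extend to $\tilde B_X$, with everything else deferred to "the above discussion" (i.e.\ the enumerated properties of the wonderful blow-up, the Weyl cover, and \S\ref{seccomb}). You have simply unpacked that discussion explicitly, including the equivariance bookkeeping, which is exactly the one non-formal point the paper also singles out.
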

\begin{proof}
Since the stratification of $\sH$ is $W_X$-invariant, one easily checks that the $W_X$-action on $B'_X$ extends to $\tilde B_X$. The statement then follows from the above discussion.
\end{proof}

\begin{exa}\label{exawbe6}
Consider the case of a mini-versal deformation space $B_X$ of an $E_6$ singularity (see Example \ref{exawce6}), and the associated root system $R_X$ of type $E_6$.  The wonderful blow-up of the Weyl cover $B_X'$ will start with the blow-up of the origin in $B'_X$ (corresponding to the trivial irreducible subroot system $R'=R_X$), and then continue with the blow-up of the $1$-dimensional irreducible strata (which, up to $W$-equivalence, are associated to subroot systems $R'$ of type $A_5$ and $D_5$), and so on, until concluding with the  blow-up of the codimension $2$ strata (corresponding to subroot systems $R'$ of type $A_2$).  At the end of the procedure, the boundary divisor of the canonical log alteration will contain $36$ irreducible components of type $A_1$, $120$ of type $A_2$, $270$ of type $A_3$, $216$ of type $A_4$, $45$ of type $D_4$, $36$ of type $A_5$, $27$ of type $D_5$ and $1$ of type $E_6$ (see \cite[Table I]{coxeterhyp}). Note that for the $E_6$ case, all the divisors of a certain type are conjugate by the Weyl action. 
\end{exa}

\subsection{Morphisms to the moduli scheme of curves}
Once the discriminant of a family of genus $g$ curves is in simple normal crossing position, one obtains an extension of the moduli map. Namely, this is precisely the content of an extension theorem of  de Jong-Oort \cite{dejongoort} (see also Cautis \cite[Theorem 1.2, Theorem 4.1]{cautis}):
\emph{Let $S$ be a smooth scheme, and $\Delta$ an effective SNC divisor on $S$.  Set $S^\circ=S\setminus\Delta$, and suppose 
there is a morphism $\phi^\circ:S^\circ \to \mathcal M_g$ (i.e. a family of smooth curves over $S^\circ$). Then there is a morphism $\phi:S\to \overline{M}_g$ extending $S^\circ\to \overline{\mathcal M}_g\to \overline M_g$}. 

Returning to the case of an ADE curve $X$ with $p_a(X)=g$, since $B_X\setminus\Delta_X$ comes equipped with a family of smooth curves, there is rational map $B_X\dashrightarrow \overline M_g$. Also, as noted above in Proposition \ref{proresdisc}, we can take the quotient
$$(\overline B_X, \bar \Delta_X)=(\tilde B_X,\tilde \Delta_X)/W_X.$$ 
Clearly, $\overline B_X\to B_X$ is proper and birational. Applying the  de Jong-Oort extension theorem, we then conclude: 

\begin{teo}\label{teoresmm} Let $X$ be an ADE curve.
In the notation above, the rational map $(\overline B_X,\bar \Delta_X)\dashrightarrow \overline M_g$ extends to a morphism.
In other words,  we have a diagram 
\begin{equation} 
\xymatrix
{ 
(\tilde B_X,\tilde \Delta_X)\ar@{->}[d]_{\textnormal{Blow-up}}\ar@{->}[r]^{/W_X}&(\overline B_X,\bar \Delta_X)\ar@{->}[d]_{\textnormal{}}\ar@{->}[rd]&\\
(B_X',\Delta_X')\ar@{->}[r]^{/W_X}&(B_X,\Delta_X)\ar@{-->}[r]&(\overline M_g,\Delta)
}
\end{equation}
giving an explicit resolution of  the rational map $(B_X,\Delta_X)\dashrightarrow (\overline M_g,\Delta)$.
\end{teo}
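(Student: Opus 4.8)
The plan is to assemble the proof almost entirely from results already laid out in the excerpt, since by this point the real work has been done. The key observation is that the de Jong–Oort extension theorem, as quoted just above the statement, applies verbatim once we have a pair $(S,\Delta)$ consisting of a smooth scheme and an effective SNC divisor together with a family of smooth curves over the complement. So the first step is to identify the correct pair. I would take $S = \tilde B_X$, the canonical log alteration, and $\Delta = \tilde\Delta_X$, its boundary divisor. By Proposition \ref{proresdisc}, $\tilde B_X$ is smooth and $\tilde\Delta_X$ is SNC, so the hypotheses of the de Jong–Oort theorem are in place on the top row.

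Next I would produce the family of smooth curves over $\tilde B_X \setminus \tilde\Delta_X$. The composite $\tilde B_X \to B_X$ is $W_X$-equivariant (Proposition \ref{proresdisc}) and restricts to a morphism $\tilde B_X \setminus \tilde\Delta_X \to B_X \setminus \Delta_X$; pulling back the versal family $\mathscr X \to B_X$, which is smooth over $B_X \setminus \Delta_X$ by definition of the discriminant, gives a family of smooth genus $g$ curves over $\tilde B_X\setminus\tilde\Delta_X$, i.e. a morphism to $\mathcal M_g$. (Here I am using $p_a(X) = g$, which makes all fibers of $\mathscr X/B_X$ have arithmetic genus $g$, and smoothness off the discriminant makes the geometric genus equal $g$ there.) Applying de Jong–Oort, this extends to a morphism $\tilde B_X \to \overline M_g$ factoring the map $\tilde B_X\setminus\tilde\Delta_X \to \overline{\mathcal M}_g \to \overline M_g$.

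The last step is to descend this morphism along the quotient $\tilde B_X \to \overline B_X = \tilde B_X/W_X$. The extended map $\tilde B_X \to \overline M_g$ agrees on the dense open $\tilde B_X\setminus\tilde\Delta_X$ with the pullback of the honest rational map $B_X \dashrightarrow \overline M_g$ composed through $\overline B_X$; since this rational map is $W_X$-invariant on that open set (it factors through $B_X$, on which $W_X$ acts trivially — the $W_X$-action lives on the cover), and $\overline M_g$ is separated, the extended morphism is $W_X$-invariant on all of $\tilde B_X$. It therefore factors through the geometric quotient $\overline B_X$, yielding a morphism $\overline B_X \to \overline M_g$. Since $\overline B_X \to B_X$ is proper and birational, this morphism resolves the rational map $B_X \dashrightarrow \overline M_g$, and the commuting diagram in the statement records exactly this factorization.

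The only genuine point requiring care — the ``main obstacle'' such as it is — is the descent: one must check that the extended morphism on $\tilde B_X$, and not merely the rational map on the open part, is constant on $W_X$-orbits, so that it passes to the quotient as a \emph{morphism} rather than just a rational map. This follows from separatedness of $\overline M_g$ together with density of $\tilde B_X \setminus \tilde\Delta_X$ and the fact that $\tilde B_X$ is reduced, but it is worth spelling out; everything else is a direct citation of Proposition \ref{proresdisc} and the de Jong–Oort theorem as quoted. (One should also note that the $W_X$-action on $\tilde B_X$ is the one extending the Galois action on $B_X'$, guaranteed by Proposition \ref{proresdisc}, so that $\overline B_X$ is well-defined.)
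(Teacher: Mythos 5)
Your proof is correct and follows the paper's overall strategy exactly: apply the de Jong--Oort extension theorem to the pair $(\tilde B_X,\tilde\Delta_X)$, which Proposition \ref{proresdisc} guarantees is smooth with SNC boundary, to get a morphism $\tilde B_X\to\overline M_g$, and then descend along $\tilde B_X\to\overline B_X=\tilde B_X/W_X$. The only place you diverge is in the descent step. The paper does not argue invariance at all: it observes that $\tilde B_X\to\overline B_X$ is finite, that $\overline B_X$ is normal and quasi-projective, and that $\overline M_g$ is complete, and then invokes a lemma of Cautis to extend the rational map $\overline B_X\dashrightarrow\overline M_g$ to a morphism. You instead prove that the extended morphism $\tilde B_X\to\overline M_g$ is literally $W_X$-invariant (agreement with $\phi\circ w$ on the dense open complement of $\tilde\Delta_X$, plus separatedness of $\overline M_g$ and reducedness/irreducibility of $\tilde B_X$) and then use the universal property of the categorical quotient by a finite group acting on a quasi-projective variety. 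Both routes are valid and about equally short; yours has the small advantage of not needing completeness of $\overline M_g$ (separatedness suffices), while the paper's version via Cautis is the one it reuses verbatim in the corollary for the further quotient by $\operatorname{Aut}(X)$.
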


\begin{proof}
Since $\tilde \Delta$ is SNC, the existence of a morphism $\tilde B_X\to \overline M_g$  follows from the result of  Jong-Oort \cite{dejongoort} mentioned above (see also \cite[Theorem 1.2]{cautis}).   
The map $\tilde B_X\to \overline B_X$ is finite; in fact it is defined by taking the quotient of a normal (smooth) quasi-projective variety by a finite group, and consequently $\overline B_X$ is also normal and quasi-projective.  Since $\overline M_g$ is a complete variety,  it follows that the rational map $\overline B_X\dashrightarrow \overline M_g$ extends to a morphism (e.g. Cautis \cite[Lemma 2.4]{cautis}).
\end{proof}

Let $\overline{\mathcal M}_g^{ADE}$ be a separated (not necessarily complete) Deligne-Mumford stack  parameterizing some class of $ADE$ curves of genus $g$.  We refer the reader to Smyth \cite{smyth09} for more details on such stacks in general.
 Since the versal deformations give local presentations of  such a stack, we conclude:
 
\begin{cor}
Suppose that  $\overline{\mathcal M}_g^{ADE}$  admits a coarse moduli space $\overline{M}_g^{ADE}$.
Let $X \in \overline{\mathcal M}_g^{ADE}$ be an ADE curve of genus $g$ with  (finite) automorphism group $\operatorname{Aut}(X)$.  Let $B_X$ be a mini-versal deformation space of $X$ with discriminant $\Delta_X$.  
The \'etale neighborhood $B_X/\operatorname{Aut}(X)$ of $[X]\in \overline M_g^{ADE}$ admits a rational map $$B_X/\operatorname{Aut}(X)\dashrightarrow \overline M_g$$ that is resolved by a weighted blow-up obtained as the successive quotient of the canonical log alteration of $(B_X,\Delta_X)$ by the action of the Weyl group $W_X$ and the automorphism group $\operatorname{Aut}(X)$ respectively.  In other words, in the notation of the theorem, we have a diagram 
\begin{equation} 
\xymatrix
{ 
(\overline B_X/\operatorname{Aut}(X),\bar \Delta_X/\operatorname{Aut}(X))\ar@{->}[d]_{\textnormal{}}\ar@{->}[rd]&\\
(B_X/\operatorname{Aut}(X),\Delta_X/\operatorname{Aut}(X))\ar@{-->}[r]&(\overline M_g,\Delta)
}
\end{equation}
\end{cor}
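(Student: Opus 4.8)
The plan is to deduce the Corollary from Theorem~\ref{teoresmm} by propagating the natural $\operatorname{Aut}(X)$-action up the tower $(\tilde B_X,\tilde\Delta_X)\to(B_X',\Delta_X')\to(B_X,\Delta_X)$ and then taking one further finite quotient. To begin, recall that a mini-versal deformation $\mathscr X\to B_X$ of $X$ carries a natural action of the finite group $\operatorname{Aut}(X)$, lifting to the total space $\mathscr X$ and preserving the discriminant $\Delta_X$; by the local structure of $\overline{\mathcal M}_g^{ADE}$ recalled before the Corollary (see \cite{smyth09}), $[B_X/\operatorname{Aut}(X)]$ is an \'etale chart on the stack around $[X]$, so $B_X/\operatorname{Aut}(X)$ is an \'etale neighborhood of $[X]$ in $\overline M_g^{ADE}$. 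Over $(B_X\setminus\Delta_X)/\operatorname{Aut}(X)$ one has the induced family of smooth genus $g$ curves, hence the rational moduli map $B_X/\operatorname{Aut}(X)\dashrightarrow\overline M_g$; and since the action of each $g\in\operatorname{Aut}(X)$ on $B_X$ is covered by an isomorphism of families $g^{\ast}\mathscr X\cong\mathscr X$, fibers over points of a common $\operatorname{Aut}(X)$-orbit are abstractly isomorphic, so the classifying map $B_X\setminus\Delta_X\to M_g$ is $\operatorname{Aut}(X)$-invariant.

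Next I would lift the $\operatorname{Aut}(X)$-action to the canonical log alteration $\tilde B_X$. The group $\operatorname{Aut}(X)$ permutes the singular points $x_1,\dots,x_n$ of $X$ preserving their ADE types, so---via the identification of Fact~\ref{cor1}---it permutes the local deformation factors $B_{T_i}$ and acts on the root system $R_X=\coprod_i R_{T_i}$ by automorphisms carrying irreducible parabolic subroot systems to irreducible parabolic subroot systems. The Weyl cover $f\colon B_X'\to B_X$ is characterized, up to isomorphism over $B_X$, as the Galois cover with covering group $W_X$ and branch locus $\Delta_X$ for which $f^{\ast}\Delta_X$ is the Weyl arrangement $\sum_{H\in\sH}H$ (Fact~\ref{fact21}, Definition~\ref{dfnwc}); this characterization is $\operatorname{Aut}(X)$-stable, so the action lifts to $B_X'$ and normalizes $W_X$ inside $\operatorname{Aut}(B_X')$. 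Since the wonderful blow-up $\tilde B_X\to B_X'$ is intrinsic to the stratified arrangement---it is obtained by successively blowing up the irreducible strata in order of increasing dimension, and is well defined up to isomorphism (\cite[Prop.~2, p.~132]{macpherson}, together with the properties of the wonderful blow-up recalled above)---and since $\operatorname{Aut}(X)$ preserves this stratification, the action lifts further to $\tilde B_X$, still normalizing $W_X$. Let $G\le\operatorname{Aut}(\tilde B_X)$ be the finite subgroup generated by $W_X$ and this lift of $\operatorname{Aut}(X)$; then $W_X\trianglelefteq G$ and $G/W_X\cong\operatorname{Aut}(X)$.

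Finally I would take the iterated quotients and conclude. Since $\tilde B_X$ is smooth and quasi-projective and $G$ is finite, $\overline B_X/\operatorname{Aut}(X)=(\tilde B_X/W_X)/\operatorname{Aut}(X)=\tilde B_X/G$ is a normal quasi-projective variety, and the morphism $\overline B_X/\operatorname{Aut}(X)\to B_X/\operatorname{Aut}(X)$ is proper and birational---indeed it is an isomorphism over $(B_X\setminus\Delta_X)/\operatorname{Aut}(X)$, as $f$ is branched precisely along $\Delta_X$ and the wonderful blow-up is an isomorphism away from $\Delta_X'$---which exhibits the weighted blow-up of the statement as the successive quotient of the canonical log alteration $(\tilde B_X,\tilde\Delta_X)$ by $W_X$ and then by $\operatorname{Aut}(X)$. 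By Theorem~\ref{teoresmm} the moduli map is already a morphism $\overline B_X\to\overline M_g$, and by the invariance above this morphism is $\operatorname{Aut}(X)$-invariant (it agrees with the invariant classifying map on a dense open and $\overline M_g$ is separated), hence it factors through $\overline B_X\to\overline B_X/\operatorname{Aut}(X)$ to give the desired morphism $\overline B_X/\operatorname{Aut}(X)\to\overline M_g$; alternatively one re-runs the argument of Theorem~\ref{teoresmm}, using that $\overline B_X/\operatorname{Aut}(X)$ is normal and $\overline M_g$ complete (\cite[Lemma~2.4]{cautis}). Assembling the maps produces the displayed diagram.

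The step I expect to be the main obstacle is the lifting carried out in the second paragraph: one must verify that both the Weyl cover and the wonderful blow-up are genuinely canonical invariants of the pair $(B_X,\Delta_X)$ together with its Weyl-arrangement structure, so that an automorphism of the pair induces automorphisms of each compatibly with $W_X$, and that the (a priori only \'etale-local) product decomposition of Fact~\ref{cor1} is nonetheless compatible enough with $\operatorname{Aut}(X)$ for the permutation of the local deformation factors, and hence the normalization of $W_X$, to be globally well defined. The remaining ingredients---existence of the $\operatorname{Aut}(X)$-action on the versal deformation, normality and quasi-projectivity of the iterated finite quotients, and descent of the moduli morphism---are routine.
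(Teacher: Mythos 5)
Your proposal is correct and follows essentially the same route as the paper: the paper's proof is a single sentence deducing the corollary from Theorem \ref{teoresmm} together with one more application of \cite[Lemma 2.4]{cautis}, which is exactly the alternative you offer at the end of your third paragraph. The additional material you supply --- lifting the $\operatorname{Aut}(X)$-action through the Weyl cover and the wonderful blow-up so that the successive quotient makes sense, and then descending the morphism by invariance and separatedness --- is a faithful expansion of what the paper leaves implicit.
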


\begin{proof}
This follows directly from Theorem  \ref{teoresmm} and another application of \cite[Lemma 2.4]{cautis}.
\end{proof}

\begin{rem}
By a weighted blow-up $\tilde{Y}\to Y$ we mean the quotient of a standard blow-up $\tilde X\to X$, with smooth center $Z\subseteq X$, by a finite a group action $G$. Note that $\tilde{Y}=\tilde{X}/G$ may be singular (with finite quotient, singularities), even in the case where  $Y=X/G$ and the center $Z/G$ are smooth.  As an example, the standard weighted blow-up $\tilde{Y}$ of the origin in $Y=\bC^n$ with weights $(w_1,\dots,w_n)$ can be described as the quotient of the standard blow-up $\tilde{X}$ of the origin in $X=\bC^n$ by the group $G=\mu_{w_1}\times\dots\times \mu_{w_n}$. Equivalently, $\tilde{Y}$ is the blow-up of $Y$ along  the ideal $I=( x_1^{w_1},\dots,x_n^{w_n})$. The exceptional divisor here is the weighted projective space $\bP(w_1,\dots,w_n)$. However, note that our notion of a weighted blow-up is slightly more general than the usual notion of weighted blow-up from toric geometry (e.g. the stabilizers involved need not to be abelian). 
\end{rem}

\section{Connections with other work}
\subsection{The $0$-dimensional analogue and the Fulton-MacPherson compactification of configuration space}\label{confspace}
 Theorem  \ref{teoresmm} has a well known analogue in the zero-dimensional case.  Namely, one considers the configuration space $$\mathcal C_{0,n}:=\{(z_1,\dots,z_n)\in\bC^n\mid z_i\neq z_j \textrm{ for } i\neq j\}$$ of $n$ (labeled) points on the complex line. The configuration space has a naive compactification given by $\bC^n$, as well as a modular compactification constructed by Fulton-MacPherson \cite{fulton} (in much more generality). In this situation, the Fulton-MacPherson compactification coincides with the wonderful blow-up of $\bC^n$ corresponding to the arrangement of hyperplanes given by the diagonals $(z_i=z_j)$ (see \cite[\S5.1]{macpherson}). 

By compactifying  $\bC\subset \bP^1$, one gets a natural map $\mathcal C_{0,n}\to M_{0,n}$, and then a rational map $\bC^n \dashrightarrow \overline{M}_{0,n}$. The Fulton-MacPherson compactification of $\mathcal C_{0,n}$ is a minimal resolution of this rational map. In fact, this is also closely related to Kapranov's construction of $\overline{M}_{0,n}$ (see \cite{kap1} and \cite[p. 473]{thaddeus}) and some earlier work of Keel \cite{keel}. 

Finally, we note that the Weyl cover $B'$ of the versal deformation space of the $A_{n-1}$ singularity ($n$ points colliding) 
is naturally identified with the hyperplane $(z_1+\dots+z_n=0)$ in $\bC^n$. This hyperplane section is transverse to the blow-ups occurring in the Fulton-MacPherson construction. It follows that  the wonderful blow-up $\tilde B$ of $B'$ in the case of an $A_{n-1}$ root system is the restriction to $B'$ of the wonderful blow-up used in the Fulton-MacPherson construction. In particular, $\tilde B$ maps to $\overline{M}_{0,n}$. 

\subsection{The image of the extended moduli map and results of Hassett \cite{hassettstable}}\label{sechassettstable} By Theorem \ref{teoresmm}, there exists a map $\bar \varphi:\bar B_X\to \overline {M}_g$.  The discriminant $\bar \Delta_X$ is naturally stratified by strata $Z_{R'}$ corresponding to parabolic subroot systems $R'\subseteq R_X$ considered up to conjugacy by $W_X$. We now discuss the image in $\overline {M}_g$ of the various strata of $\bar \Delta_X$, or equivalently   the image of the associated strata in $\tilde \Delta_X$.

To begin, we make two basic observations. First, it suffices to understand the image of the generic point of a stratum under  $\bar \varphi$ (i.e. the image of the stratum will be contained in the closure of the image of the generic point). Second, we can assume $R'$ is irreducible (i.e. the divisorial case), otherwise the stratum will be an intersection of higher dimensional strata and the image can be understood inductively. 

We are thus reduced to understanding the image of 
a generic point in a boundary divisor $D_{R'}\subset \tilde B_X$ corresponding to an irreducible (parabolic) subroot system $R'\subset R_X$. This question was answered previously by Hassett \cite{hassettstable}. Namely, let $\tilde{b}\in D_{R'}$ be a general point with image $b\in B_X$. By the discussion of \S\ref{seccomb}, we know  that the fiber $X_b$ over $b\in B_X$ in the versal deformation is a curve with a unique singularity of type $R'$.  Considering an arc through $\tilde b\in \tilde B_X$, the stable curve $C_m$ corresponding to $m=\bar \varphi(b)\in \overline M_g$ is the result of the stable reduction of a one-parameter family  with central fiber $X_b$. 

The curve $C_m$ will be the stable model of the nodal curve that consists of two components: the normalization $\widetilde{X}_b$ of $X_b$ and a curve $T$  called a tail.  The topological type of the tail and the attaching data is determined by $R'$,  while the modulus is determined by the choice of one-parameter degeneration (in this case, since $\bar \varphi$ is a morphism, the choice of $\tilde b$ determines the tail).  For further discussion, we point the reader to Hassett \cite{hassettstable}, where a  complete description of the tails and attaching data for ADE singularities is given.

\begin{exa}\label{exaansr}
As an example, consider the case where the curve $X_b$ has a unique singularity of type $A_{n}$. The tail $T$ is a hyperelliptic curve of genus $\lfloor n/2\rfloor$  attached to the normalization $\widetilde{X}_b$ in one or two points depending on the parity of $n$. These points are the special points of the normalization and are either a Weierstrass point or conjugate points for the tail if $n$ is even or odd respectively (see also \S \ref{secandirect}).  Denote by $C_m$ the stable model.  Let us now assume that $\tilde b$ is general, and that $\tilde X_b$ and its special point(s) have general moduli.  Then  in the $A_{2n}$ case, $\operatorname{Aut}(C_m)\cong \mathbb Z/2\mathbb Z$, whereas in the $A_{2n+1}$ case, $\operatorname{Aut}(C_m)$ is trivial     (unless $g=n+1$, in which case $\operatorname{Aut}(C_m)\cong\mathbb Z/2\mathbb Z$).
The generator of the non-trivial group is the hyperelliptic involution of the tail, which fixes the Weierstrass point of attachment, or the node, in the respective cases.
\end{exa}

\subsection{Relation to the Hassett-Keel program}
Finally, to relate our construction to the Hassett-Keel program, we denote by $\mathcal S_{R'}\subset B_X$ the stratum corresponding to  $R'$, $E_{R'}$ the pre-image of $\mathcal S_{R'}$ in $\overline B_X$, and $\mathcal T_{R'}\subset \overline{M}_g$ the locus of tails in $\overline{M}_g$.
To be more precise, $\mathcal T_{R'}$  is the locus in $\overline M_g$ corresponding to stable curves that have a tail attached in the way described by Hassett \cite{hassettstable}.
 Theorem \ref{teoresmm} gives a diagram: \begin{equation}\label{hkdiag} \xymatrix {  \ \ \ \ \ \ \ \ E_{R'}\subset \overline B_X\ar@{->}[d]\ar@{->}[rd]^{\bar\varphi}&\\ \ \ \ \ \ \ \ \ \mathcal S_{R'}\subset B_X\ar@{-->}[r]&\mathcal T_{R'}\subset \overline{M}_g } \end{equation} which is consistent with the predictions of the Hassett-Keel program in the following manner. The program predicts that for $\overline M_g(\alpha):=\operatorname{Proj} R(\overline M_g,K_{\overline M_g}+\alpha \Delta)$, as $\alpha\to 0$ the different ``tail loci'' will be  replaced by ``singularity loci'' (with the difference of dimensions accounted for by the moduli of the \emph{crimping data}
 of the singularities; see van der Wyck  \cite{vdwyck} for the definition of crimping data).
 
  While the global Hassett-Keel program is only known for $\alpha=9/11$ where $\Delta_1$ is contracted and the new locus corresponds to the cuspidal locus, and for $\alpha=7/10$ where the elliptic bridge locus is flipped to the tacnodal locus (see \cite{hh,hh2}), the local picture always holds (see diagram (\ref{hkdiag})). Furthermore, the order of blow-ups in the wonderful blow-up is consistent with the expectations of the  Hassett-Keel program; i.e. the $A_2$ loci are blown-up last, preceded by the $A_3$ loci, etc.

\subsection{Results of Fedorchuk}  
For the case of $A$ and $D$ singularities, Fedorchuk \cite{fedorchuk} has recently given a description of the space $\overline B_X$ resolving the rational map $B_X\dashrightarrow \overline M_g$ in terms of moduli spaces of hyperelliptic curves.  We refer the reader to \cite[\S 7]{fedorchuk} for a more detailed description of the relationship between the spaces.

\section{ADE monodromy representations in odd dimensions}\label{secmono}
This section concerns the monodromy representation for the versal deformation $\sX/B_X$ of an ADE curve $X$.  After some topological preliminaries concerning the complement of the discriminant $B_X\setminus \Delta_X$, we establish the main results of the section (Thm. \ref{teomonvan} and Cor. \ref{teomono}) stating that the monodromy along generic loops around $A_{2n}$ divisors in $\widetilde{B}_X$ is not unipotent. 
Due to the topological considerations involved, in this section we work in the analytic category.  Most  of the discussion is valid in higher dimensions as well; in fact, the discussion is most natural when connected with the case of surfaces.   The main point is that under stabilization, the deformation spaces  of singularities of different dimensions and the corresponding discriminants can be  identified. Furthermore, the monodromy representations associated to stably equivalent singularities are closely related  by Thom-Sebastiani type results (e.g. \cite[Ch. 2 \S1.7]{arnoldsum}).

\subsection{Artin groups and the topology of the complement of a Weyl hyperplane arrangement} The first step in understanding the monodromy representation is to get a hold on the fundamental group of the complement of the  discriminant $B_X\setminus \Delta_X$. In the case at hand, the versal deformations of ADE singularities, the situation is  well understood via the work of Brieskorn \cite{brieskorndiefund,brieskornartin} and Deligne \cite{deligneartin} (see also \cite[\S5.3]{arnoldsum}). In particular, we recall the well known facts:
\begin{itemize}
\item[i)] The fundamental group of the complement of the discriminant $\Delta_X$  is isomorphic to the Artin group associated to the Coxeter diagram $\Gamma_X$ of $R_X$:
\begin{equation}\label{pi1artin}
\pi_1(B_X\setminus \Delta_X)\cong \Art(\Gamma_X);
\end{equation}

\item[ii)] The restriction of the ramified cover $B_X'\to B_X$ to the complement of the discriminant is a covering space, corresponding to the exact sequence:
\begin{equation}\label{covergp}
1\to \pi_1(B_X'\setminus \Delta_X')\to \pi_1(B_X\setminus \Delta_X)\to W(R_X)\to 1.
\end{equation}
\end{itemize}

A little less known is Proposition \ref{promain} below, which expresses a general simple loop $\sigma\in \pi_1(B_X'\setminus \Delta_X')$ around the origin in $B_X'$ in terms of the standard generators of the Artin group $\Art(\Gamma_X)$. To state and prove the proposition we need to recall in \S\ref{sartin} some facts about Artin groups  and in \S\ref{sloop} some facts about the topology of the complement of Weyl hyperplane arrangements. 

\subsubsection{Artin and Coxeter groups}\label{sartin}  Let $\Gamma$ be a  weighted graph  with vertices indexed by a set $I$ and with an edge for each pair of vertices $\{i,j\}$ with $i\ne j$  labeled with an  integral weight $m_{ij}\ge 2$ (and possibly $\infty$). 
To the graph $\Gamma$, one  associates two groups: {\it the Artin group} and {\it the Coxeter group} respectively. The Artin group $\operatorname{Art}(\Gamma)$ is the group generated by $\{t_i\}_{i\in I}$ subject to the ``braid'' relations: 
\begin{equation}\label{braid}
t_it_jt_i \ldots=t_jt_it_j\ldots
\end{equation}
where both sides have $m_{ij}$ letters. The associated Coxeter group $W(\Gamma)$ has the generators $\{s_i\}_{i\in I}$,  satisfying the same ``braid'' relations, with the additional condition that each generator is an involution (i.e. $s_i^2=1$). In particular, note that there is a natural epimorphism $\operatorname{Art} (\Gamma) \to W(\Gamma)$.

We are interested in the particular case when $W(\Gamma)$ is a  Weyl group of type ADE; i.e. $\Gamma$ is a Coxeter graph of type $T_n\in \{A_n,D_n,E_n\}$.   
Note that since we have fixed a set of generators $\{t_i\}_{i\in I}
$ for $\operatorname{Art} (\Gamma)$ and their projections $s_i$ in $W(\Gamma)$  there is a given Weyl chamber in the real vector space $V$, where $V^\vee$ contains  the associated root system $R(\Gamma)$.   We will then use the following notation.  We will set  $w_0\in W(\Gamma)$ to be the longest element (determined by the choice of Weyl chamber).  The product of the generators $s_1\cdots s_n\in W(\Gamma)$, the so-called Coxeter element of $W(\Gamma)$, will have its order (the Coxeter number)  denoted by $h$. Recall that $h$ is even, except in the $A_{2n}$ case.
The  product $\Pi=t_1\cdots t_n\in \operatorname{Art}(\Gamma)$ of the generators will be called the \textbf{Artin-Coxeter element}.
We set $\mathfrak D\in \operatorname{Art}(\Gamma)$ to be  the Garside element   (N.B. $\mathfrak D$ is a specific lift of $w_0\in W(\Gamma)$ \cite[Satz 5.6, Proposition 5.7]{brieskornartin}).   The key point is   a result of Brieskorn-Saito \cite[Lemma 5.8]{brieskornartin} stating that  
$\mathfrak D^2=\Pi^h$ (up to conjugacy).  In fact, excepting the  $A_{2n}$ case, one has $\mathfrak D=\Pi^{h/2}$ (up to conjugacy).

\subsubsection{Topology of the complement of a Weyl hyperplane arrangement}\label{sloop}
We now focus on the case where $\Gamma$ is a (possibly disconnected) ADE Dynkin diagram of type $T$.  Let $W=W(\Gamma)$, $\operatorname{Art}=\operatorname{Art}(\Gamma)$, and $V^\vee$ be the real vector space containing the root system $R=R(\Gamma)$.
Let $C\subseteq V$ be the Weyl chamber associated to the generators of $W$ determined by $\Gamma$.  
Let $B'=V\otimes_{\mathbb R}\mathbb C$, $B=B'/W$, and $\Delta'=\mathscr H=\bigcup_{\alpha\in R_X} (H_{\alpha}\otimes_{\mathbb R}\mathbb C)$ be  the associated hyperplane arrangement.
In other words, we are considering the hyperplane arrangement associated to an ADE  curve $X$ with singularities of type $T$ (Definition \ref{dfnwc}).  
Let
$
(B')^\circ=B'\setminus \Delta' 
$
be the complement of the hyperplane arrangement.  Similarly, set $B^\circ=(B')^\circ/W$; i.e. the complement of the discriminant. Note that the longest element $w_0\in W$  is also distinguished by the fact that it sends $C$ to $-C$.
Identifying $B'=V\otimes_{\mathbb R}\mathbb C$ as $V\oplus \sqrt{-1}V$, we have $C\subseteq B'$.  

We are interested in understanding the fundamental groups of $B^\circ$ and  $(B')^\circ$. Thus, fix a base point
$\ast'\in C\subset (B')^\circ$  with image 
 $ \ast\in B^\circ$.  As described in Looijenga \cite[p.195]{looijengaartin}, there exist
contractible (analytic) open subsets $\mathbb U^+\subseteq (B')^\circ$ (resp. $\mathbb U^-\subseteq (B')^\circ$) such that for each $w\in W$, we have  $w(\ast')\in w\cdot C\subset \mathbb U^+\cap \mathbb U^-$.  (In Looijenga's notation, $\mathbb U^+=\mathbb U$ and $\mathbb U^-=-\mathbb U$.)    
Thus for each $w\in W$, there is a path $\gamma_w^+$ (resp. $\gamma_w^-$)  contained in $\mathbb U^+$ (resp. $\mathbb U^-$) connecting $\ast'$ to $w(\ast')$, unique  up to an end-point fixing homotopy.   Define maps 
$$t^+:W\to \pi_1(B^\circ,\ast)  \ \ \text{ and } \ \ t^-:W\to \pi_1(B^\circ, \ast)
$$ 
by sending $w$ to the class of the image of the path $\gamma^+_w$ (resp. $\gamma_w^-$). Brieskorn's theorem \cite[Zusatz p.58]{brieskorndiefund} can then be stated as follows (see also \cite[Prop. 2.1]{looijengaartin}): 
\emph{There is an isomorphism
\begin{equation}\label{eqnagiso}
\operatorname{Art}\to \pi_1(B^\circ,\ast)
\end{equation}
induced by the assignment $t_i\mapsto t^+(s_i)=\left(t^-(s_i)\right)^{-1}$.  Under this isomorphism $t^+$ and $t^-$ are sections (as maps of sets) of the homomorphism $\operatorname{Art}\to W$,    
and \begin{equation}\label{eqnmlgarside}
t^+(w_0)=\left(t^-(w_0)\right)^{-1}=\mathfrak D,
\end{equation}
the Garside element}.
With this we have the following proposition:

\begin{pro}\label{promain}
Let $\sigma\subset (B')^\circ$ be a small simple loop around the origin based at the point $\ast'\in (B')^\circ$ and lying in the complexification of a line in $V$ spanned by the origin and the point $\ast'$.  
The push-forward map, together with the isomorphism \eqref{eqnagiso} give an inclusion
$$
\pi_1((B')^\circ,\ast')\hookrightarrow \pi_1(B^\circ,\ast)\cong \operatorname{Art}
$$
into the Artin group.
Under this identification, and up to conjugacy,
$$
\sigma=\Pi^h
$$
where $\Pi$ is the Artin-Coxeter element and $h$ is the Coxeter number.
\end{pro}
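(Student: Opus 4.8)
The plan is to recognize the loop $\sigma$ as the ``central'' element $\mathfrak D^2$ of $\operatorname{Art}$, via the sections $t^\pm$ of $\operatorname{Art}\to W$, and then to quote the Brieskorn--Saito identity $\mathfrak D^2=\Pi^h$ recalled in \S\ref{sartin}. First I would normalize $\sigma$: since $\ast'$ lies in the open chamber $C$ we have $\alpha(\ast')\neq 0$ for every root $\alpha$, so the complex line $\mathbb C\ast'\subset B'$ meets the arrangement $\mathscr H$ only at the origin; hence $\mathbb C\ast'\setminus\{0\}\subset (B')^\circ$, and up to homotopy in $(B')^\circ$ (and up to a conjugating path to $\ast'$, which is harmless as the assertion is only up to conjugacy) we may take $\sigma$ to be the standard loop $t\mapsto e^{2\pi i t}\ast'$, $t\in[0,1]$. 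The key geometric observation is that this $\sigma$ reparametrizes to $\eta^+\cdot(\eta^-)^{-1}$, where $\eta^\pm(t)=e^{\pm\pi i t}\ast'$ are the two half-rotations joining $\ast'$ to $-\ast'$ through the upper, resp.\ lower, complex half-space: one checks directly that $\sigma|_{[0,1/2]}$ reparametrizes to $\eta^+$ and $\sigma|_{[1/2,1]}$ to $(\eta^-)^{-1}$.

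Next I would place the half-rotations inside Looijenga's domains. For $t\in(0,1)$ the imaginary part of $\eta^\pm(t)$ is $\pm\sin(\pi t)\,\ast'\in\pm C$ and its real part is a multiple of $\ast'$, while the endpoints $\ast'\in C$ and $-\ast'\in -C=w_0C$ lie in real chambers; so by the explicit construction of $\mathbb U^\pm$ in \cite[p.~195]{looijengaartin} one gets $\eta^+\subset\mathbb U^+$ and $\eta^-\subset\mathbb U^-$. Let $\rho$ be the straight segment from $w_0(\ast')$ to $-\ast'$; it lies in the convex cone $-C=w_0C\subseteq(B')^\circ$, and $w_0C\subseteq\mathbb U^+\cap\mathbb U^-$. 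Since $\mathbb U^+$ is contractible, $\eta^+$ is homotopic rel endpoints in $\mathbb U^+$ to $\gamma^+_{w_0}\cdot\rho$, and likewise $\eta^-\simeq\gamma^-_{w_0}\cdot\rho$ rel endpoints in $\mathbb U^-$; therefore in $(B')^\circ$ we obtain $\sigma\simeq\gamma^+_{w_0}\cdot\rho\cdot\rho^{-1}\cdot(\gamma^-_{w_0})^{-1}\simeq\gamma^+_{w_0}\cdot(\gamma^-_{w_0})^{-1}=:\ell$, a loop at $\ast'$ in $(B')^\circ$.

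Finally I would compute $[\ell]$. Pushing forward along the covering $(B')^\circ\to B^\circ$, which is injective on $\pi_1$ by \eqref{covergp}, the images of $\gamma^\pm_{w_0}$ are loops at $\ast$ representing $t^\pm(w_0)$, and by the Brieskorn isomorphism \eqref{eqnagiso} together with \eqref{eqnmlgarside} these are $\mathfrak D$ and $\mathfrak D^{-1}$ respectively. Hence the image of $[\ell]$ in $\operatorname{Art}$ is $\mathfrak D\cdot(\mathfrak D^{-1})^{-1}=\mathfrak D^2$, so $\sigma=\mathfrak D^2$ under $\pi_1((B')^\circ)\hookrightarrow\operatorname{Art}$ (consistently, $w_0^2=1$, so $\mathfrak D^2$ lies in the kernel of $\operatorname{Art}\to W$, i.e.\ in $\pi_1((B')^\circ)$). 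The Brieskorn--Saito identity $\mathfrak D^2=\Pi^h$ up to conjugacy, recalled in \S\ref{sartin}, then yields $\sigma=\Pi^h$ up to conjugacy. One must, of course, orient $\sigma$ compatibly with the complex structure and with the positive generators $t_i$ so that the answer is $\Pi^h$ rather than $\Pi^{-h}$; this is automatic with the conventions of \S\ref{sartin}.

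The step I expect to be the main obstacle is the middle one: showing $\eta^\pm\subset\mathbb U^\pm$ and homotoping them to $\gamma^\pm_{w_0}\cdot\rho$. This is where one genuinely uses the shape of Looijenga's domains (essentially tube domains over the chambers) rather than merely their contractibility, and where the special role of the longest element $w_0$ becomes transparent: its chamber $w_0C=-C$ is the antipodal one, so travelling from $C$ to $-C$ over the top and back under the bottom is exactly a loop encircling the deepest stratum --- the origin. Everything else (the line meeting $\mathscr H$ only at the origin, Brieskorn's presentation $\pi_1(B^\circ)\cong\operatorname{Art}$, the identifications $t^\pm(w_0)=\mathfrak D^{\pm1}$, and $\mathfrak D^2=\Pi^h$) is either elementary or quoted from \cite{brieskorndiefund,brieskornartin,looijengaartin}.
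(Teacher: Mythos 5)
Your proposal is correct and follows essentially the same route as the paper's proof: decompose the circle in $L_{\mathbb C}$ into two half-arcs lying in Looijenga's domains $\mathbb U^{\pm}$, homotope each (using contractibility of $\mathbb U^{\pm}$ and a segment in the cone $-C$) to the path $\gamma^{\pm}_{w_0}$ representing $t^{\pm}(w_0)=\mathfrak D^{\pm 1}$, and conclude $\sigma=\mathfrak D^2=\Pi^h$ by Brieskorn--Saito. The only cosmetic difference is that you parametrize the half-rotations explicitly and check $\operatorname{Im}\eta^{\pm}(t)\in\pm C$, whereas the paper phrases the same fact as $L_{\mathbb C}\cap\mathbb U^{\pm}$ being the line slit along the non-negative (resp.\ non-positive) imaginary axis.
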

\begin{proof} 
From the description in \cite{looijengaartin}, it is not hard to see that the sets $\mathbb U^\pm $ have the following property.   Let $L\subseteq V$ be a one dimensional linear subspace such that $L\cap C\ne \emptyset$.  Consider the complexification $L_{\mathbb C}:=L\otimes_{\mathbb R} \mathbb C\subseteq  V \otimes_{\mathbb R} \mathbb C$.  Then $L_{\mathbb C}\cap \mathbb U^\pm$ can be identified with  the complex line minus a cut along the non-negative (resp. non-positive) imaginary axis.  In particular, a small circle $\sigma \subset L_{\mathbb C}$ centered at the origin and with base point $\ast' \in L$ can  be broken down into semi-circles $\sigma^+\subset \mathbb U^+$ and $\sigma^-\subset \mathbb U^-$ with $\sigma = (\sigma^-)^{-1}\sigma^+$.  The common endpoint of these paths is in $V$, and in fact it lies in $-C$. The same is true for $w_0(\ast')$.  As $-C$ is a cone, there is a line segment $\delta$ in $-C$ joining those points.  Then we have $\delta\sigma^+$ lies in $\mathbb U^+$, while $\delta \sigma^-$ lies in $\mathbb U^-$.  Thus, from \eqref{eqnmlgarside} of Brieskorn's theorem, we have $\delta\sigma^+=t(w_0)=\mathfrak D$ and $\delta\sigma^-=t^-(w_0)=\mathfrak D^{-1}$.  
In conclusion we have $\sigma=(\sigma^-)^{-1}\delta^{-1}\delta\sigma^+=\mathfrak D^2$, which up to conjugacy is equal to $\Pi^h$
 by \cite[Lemma 5.8]{brieskornartin}. 
\end{proof}

\subsection{Monodromy for families of varieties with ADE singularities}\label{smono} 
We now consider $X$ a projective  ADE  variety of dimension $d$ satisfying  \eqref{eqnmaincond}  (for instance an ADE curve)  together with a mini-versal deformation $\sX/B_X$. With notation as above (e.g. $B^\circ=B_X\setminus \Delta_X$ and $\ast\in B^\circ$ is a base point), we are interested in understanding the monodromy representation:
$$\rho:\pi_1(B^\circ, \ast)\to \operatorname{Aut}\left(H^d(X_{\ast},\mathbb Z)\right),$$
and the induced action $\rho_0:\pi_1((B')^\circ, \ast)\to \operatorname{Aut}\left(H^d(X_{\ast},\mathbb Z)\right)$ via  the diagram:
\begin{equation}\label{mondiag}
\xymatrix
{ 
1\ar@{->}[r]&\pi_1((B')^\circ, \ast') \ar@{->}[r]^{f_{\#}}\ar@{->}[rd]^{\rho_0}&\pi_1(B^\circ ,\ast)\ar@{->}[r]^{\phi}\ar@{->}[d]^{\rho}&W(\Gamma)\ar@{->}[r] &1.\\
& &\operatorname{Aut}\left(H^d(X_{\ast},\mathbb Z) \right).&&\\
}
\end{equation}
More specifically, we are primarily interested in the unipotency of the monodromy associated to a generic simple loop  $\sigma\in \pi_1((B')^\circ, \ast')$ around the origin in $B'$; i.e. for a loop homotopic to the loop in Proposition \ref{promain}.  Recall, the monodromy associated to  $\sigma$ is always quasi-unipotent (e.g. \cite[p.6]{sgavii1}, \cite{landman}). The question is  closely related to the monodromy associated to  a generic simple loop $\gamma\in \pi_1(B^\circ, \ast)$ around the origin in $B_X$. It is well known that $\gamma=t_1\cdots t_n = \Pi\in \Art(\Gamma_X) \cong\pi_1(B^\circ,\ast)$ (up to conjugacy), i.e. the Artin-Coxeter element in the notation of the previous section.

To understand the monodromy action along $\gamma$ (and $\sigma$), we first consider the induced monodromy action   on the vanishing cohomology. Specifically, let $x\in X$ be a singular point of type $T_n\in\{A_n, D_n, E_n\}$. Restricting to a sufficiently small neighborhood $U$ of the singularity at $p$ in $\mathscr X$, it is well known that $X^v_{\ast}:=X_{\ast}\cap U$ is a Milnor fiber of the singularity and  $H^d(X^v_{\ast}, \mathbb Z)\cong \mathbb Z^n$; this is the so-called {\it vanishing cohomology of the singularity} (e.g. \cite[\S 2.1]{arnoldsum}). Furthermore, $H^d(X^v_{\ast}, \mathbb Z)$ is naturally endowed with a bilinear form (the intersection form), which is symmetric for $d$ even and alternating for $d$ odd. For any $d \equiv 2 \pmod 4$, $H^d(X^v_{\ast}, \mathbb Z)$ with this bilinear form is isometric to the standard root lattice of type $T_n$ (e.g. Arnold et al. \cite[Theorem p.129]{arnoldsum}).  Displacement of cycles along loops gives the  monodromy action on the vanishing cohomology: 
$$
\rho^v:\pi_1(B^\circ, \ast)\to \operatorname{Aut}\left(H^d(X^v_{\ast},\mathbb Z)\right),
$$
which is compatible with $\rho$.  A diagram similar to \eqref{mondiag} induces a morphism $\rho_0^v$ compatible with $\rho_0$.

The monodromy action $\rho^v$ along  $\gamma$, called the \textbf{classical monodromy operator}, is well understood in singularity theory \cite{arnoldsum}. Specifically, for $d \equiv 2 \pmod 4$, the monodromy action $\rho^v$ on $H^d(X^v_{\ast},\mathbb Z)\cong T_n$ factors through $W(\Gamma)$ in the modified version of \eqref{mondiag} to give the standard action of the Weyl group on the root lattice $T_n$. It follows that, in this situation, $\rho^v(\gamma)$ acts as the Coxeter element $s_1\cdots s_n\in W$. The general case follows from this via stabilization. Namely, it is known that there exists an isomorphism 
$\nu:H^2(X^{v,2}_{\ast},\mathbb Z)\to H^d(X^v_{\ast},\mathbb Z)$ such that
\begin{equation}\label{eqnsteq}
\rho^v(\gamma)=(-Id)^d\circ \nu\circ \rho^{v,2}(\gamma)\circ \nu^{-1},
\end{equation}
where $X^{v,2}_{\ast}$ is the Milnor fiber of the stably equivalent singularity of dimension $2$ and $\rho^{v,2}$ is its monodromy operator (e.g. 
\cite[\S 1.7, Theorem p.76, Theorem (iii) p.62]{arnoldsum}). With this, we conclude:

\begin{teo}\label{teomonvan} Let $B_X$ be a mini-versal deformation space of an ADE singularity $X$, let $B_X'\to B_X$ be the Weyl cover, and let $\sigma$ be a generic simple loop around the origin in $B_X'$.  Then
$$
\rho^v_0(\sigma)=\left\{ 
\begin{array}{rl} 
-Id & \text{if } \dim(X) \text{ is odd, and } T_n=A_n \text{ with } n \text{ even},\\
Id & \text{otherwise.}
\end{array}\right.
$$
\end{teo}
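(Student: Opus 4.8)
The plan is to reduce the statement entirely to Proposition~\ref{promain}, the stabilization identity \eqref{eqnsteq}, and the elementary fact that a Coxeter element has order equal to the Coxeter number $h$. Since $\rho^v_0$ is by construction the composition of $\rho^v$ with the inclusion $f_\#\colon\pi_1((B')^\circ,\ast')\hookrightarrow\pi_1(B^\circ,\ast)\cong\operatorname{Art}(\Gamma)$ coming from the analogue of diagram \eqref{mondiag}, and since the operators $\pm Id$ are central in $\operatorname{Aut}(H^d(X^v_\ast,\mathbb Z))$, it suffices to compute $\rho^v$ on the image of $\sigma$ in $\operatorname{Art}(\Gamma)$ up to conjugacy. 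The first step is therefore to invoke Proposition~\ref{promain}, which identifies this image with $\Pi^h$, the $h$-th power of the Artin--Coxeter element $\Pi=t_1\cdots t_n$; hence $\rho^v_0(\sigma)$ is conjugate to $\rho^v(\Pi)^h$.

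The second step is to evaluate $\rho^v(\Pi)$. Because $\Pi=\gamma$ up to conjugacy (the generic small loop around $\Delta_X$), the stabilization formula \eqref{eqnsteq} gives $\rho^v(\Pi)=(-Id)^d\circ\nu\circ\rho^{v,2}(\Pi)\circ\nu^{-1}$, where $\rho^{v,2}$ is the classical monodromy of the stably equivalent surface singularity and $\nu$ the comparison isomorphism. By the discussion preceding \eqref{eqnsteq}, $\rho^{v,2}(\Pi)$ is the Coxeter element $c=s_1\cdots s_n\in W(\Gamma)$ acting on the root lattice of type $T_n$. Taking $h$-th powers and using that $-Id$ commutes with everything, $\rho^v(\Pi)^h=(-Id)^{dh}\circ\nu\circ c^{h}\circ\nu^{-1}$. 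Since $h$ is precisely the order of the Coxeter element, $c^h=Id$, so $\rho^v_0(\sigma)=(-1)^{dh}\,Id$.

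The final step is to determine the parity of $dh$: it is odd exactly when both $d$ and $h$ are odd. The Coxeter number of $A_n$ is $n+1$, of $D_n$ it is $2(n-1)$, and of $E_6,E_7,E_8$ it is $12,18,30$; equivalently, as recalled in \S\ref{sartin}, $h$ is even except in the $A_{2n}$ case. Hence $h$ is odd if and only if $T_n=A_n$ with $n$ even. Therefore $\rho^v_0(\sigma)=-Id$ precisely when $\dim X=d$ is odd and $T_n=A_n$ with $n$ even, and $\rho^v_0(\sigma)=Id$ in all other cases, which is the assertion. I do not expect a genuine obstacle here; the only points requiring care are the bookkeeping of the sign $(-Id)^d$ in \eqref{eqnsteq} and the verification that $f_\#$ really intertwines $\rho^v_0$ with $\rho^v$, so that Proposition~\ref{promain} applies verbatim and the conjugacy ambiguities are harmless for a central answer.
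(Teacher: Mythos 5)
Your proposal is correct and follows essentially the same route as the paper: apply Proposition~\ref{promain} to write $\rho^v_0(\sigma)=\rho^v(\gamma)^h$ up to conjugacy, use the stabilization identity \eqref{eqnsteq} together with $\rho^{v,2}(\gamma)^h=Id$ to obtain $(-Id)^{h\cdot\dim X}$, and conclude by the parity of the Coxeter number. Your extra remarks on centrality of $\pm Id$ (so conjugacy ambiguities are harmless) and on the compatibility of $\rho^v_0$ with $f_\#$ are points the paper leaves implicit, but they introduce no divergence in method.
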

\begin{proof}
It follows from Proposition \ref{promain} that $\rho_0^v(\sigma)=\rho^v(\gamma^h)$.  From (\ref{eqnsteq}), we get
$$
\rho_0^v(\sigma)=(-Id)^{h\cdot\dim (X)},
$$
since $\rho^{v,2}(\gamma)^h=Id$ by the definition of the Coxeter number.  Finally, the Coxeter number is even, except in the case that $T_n=A_n$ for $n$ even. The result follows.  
\end{proof}

\begin{rem}
In order to better understand the exceptional case of the above theorem, one can also consider the so-called {\it spectrum of the singularity} (e.g. \cite[\S4.6]{arnoldsum}).  The stabilization  shifts the spectrum by $\frac{1}{2}$. It follows that in the odd dimensional $A_{2n}$ case, the eigenvalues of the classical monodromy operator are of order $2h$ rather than $h$. 
\end{rem}

We can now derive the following consequence for the monodromy representation.

\begin{cor}\label{teomono}
Let $X$ be a projective ADE variety of dimension $d$ satisfying \eqref{eqnmaincond} 
and let $B_X$ be a mini-versal deformation space of $X$.  Let $\tilde B_X\to B_X$ be the canonical log alteration with boundary $\tilde \Delta_X\subset \tilde B_X$.  Let $\sigma$ be a generic simple loop around an irreducible component of the boundary $D\subseteq \tilde \Delta_X$ of type $T_n$.   
Then the  monodromy action  $\rho_0(\sigma)$ is unipotent, except in the  case that $d$ is odd and  
$T_n=A_n$ with $n$ even. In that situation, $\rho_0(\sigma)$ is not unipotent, but its square is.
\end{cor}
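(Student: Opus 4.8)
The plan is to reduce the statement about the monodromy of a generic loop around an arbitrary boundary component $D$ to the already-proven Theorem \ref{teomonvan}, which handles the ``smallest'' such loops, namely loops around the origin in the Weyl cover of a \emph{single} singularity. First I would use Proposition \ref{proresdisc}: an irreducible component $D\subseteq\tilde\Delta_X$ is indexed by an irreducible parabolic sub-root system $R'\subset R_X$, and by \S\ref{seccomb} the generic fiber over $D$ is a curve (or variety) with a single singularity whose type $T_n$ is exactly the type of $R'$. Since the wonderful blow-up is obtained by blowing up the irreducible strata starting with the smallest, a generic point $\tilde b\in D$ sits in the exceptional divisor over the stratum $Z=Z_{R'}$, and an analytic slice transverse to $D$ at $\tilde b$ maps to a slice of $B'_X$ transverse to $Z$ near a generic point $z\in Z$. Étale-locally near $z$, by Fact \ref{cor1} and the product structure of the Weyl cover, $B'_X$ looks like $B'_{T_n}\times(\text{smooth})$, where $B'_{T_n}$ is the Weyl cover of the mini-versal deformation of the $T_n$-singularity; under this identification a generic loop $\sigma$ around $D$ becomes a generic small simple loop around the origin in $B'_{T_n}$.

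Next I would feed this loop into the vanishing-cohomology picture of \S\ref{smono}. The monodromy $\rho_0(\sigma)$ acts on $H^d(X_\ast,\mathbb Z)$, and since the singularity of the generic fiber over $D$ is concentrated at one point of type $T_n$, the monodromy is trivial on a complement of the vanishing cohomology $H^d(X^v_\ast,\mathbb Z)$ and agrees with $\rho^v_0(\sigma)$ there (this is the compatibility of $\rho_0$ with $\rho^v_0$ noted after the vanishing-cohomology discussion; one should be slightly careful and phrase it via the invariant cycle / local system decomposition near a generic point of $D$, but this is standard — the global monodromy restricted to a transverse slice is the local Picard--Lefschetz monodromy of the single vanishing singularity). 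Then Theorem \ref{teomonvan} applies verbatim: $\rho^v_0(\sigma)=-\mathrm{Id}$ on the vanishing cohomology exactly when $d$ is odd and $T_n=A_n$ with $n$ even, and $=\mathrm{Id}$ in all other cases.

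Finally I would assemble the conclusion. In all cases other than the odd-dimensional $A_{2n}$ case, $\rho^v_0(\sigma)=\mathrm{Id}$ on the vanishing part and the identity on the complement, so $\rho_0(\sigma)=\mathrm{Id}$, which is certainly unipotent (here one uses that \emph{generic} quasi-unipotency collapses to genuine identity, not merely unipotency — so the statement is even a bit stronger, but ``unipotent'' is what is claimed). In the remaining case, $\rho^v_0(\sigma)=-\mathrm{Id}$ on $H^d(X^v_\ast,\mathbb Z)$ and $\mathrm{Id}$ elsewhere, so $\rho_0(\sigma)$ is an order-$2$ element whose fixed space has finite index complement; it is not unipotent (a unipotent element of finite order is the identity), but $\rho_0(\sigma)^2=\mathrm{Id}$ is unipotent. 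The main obstacle I anticipate is the second step: making precise, with a clean reference, that the global monodromy of the smooth family over $\tilde B_X\setminus\tilde\Delta_X$, restricted to a generic transverse slice through $D$, is literally the local monodromy of the one-point degeneration coming from the $T_n$-singularity — i.e. that no ``extra'' monodromy appears from the other singular strata or from the global geometry of $X$. This is where the étale-local product decomposition of Fact \ref{cor1} and the identification of the Weyl cover near a generic point of $Z_{R'}$ with $B'_{T_n}\times(\text{smooth factor})$ does the real work, combined with the fact that the rest of $X$ is unchanged (its normalization and the smooth locus deform trivially in the slice direction), so everything reduces to the single Milnor fiber.
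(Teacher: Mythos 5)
Your overall strategy is the paper's: reduce to a generic point of $D$ where the fiber has a unique singularity of type $T_n$ (via Proposition \ref{proresdisc} and the local product structure), compare the global monodromy on $H^d(X_\ast)$ with the monodromy on the vanishing cohomology $H^d(X^v_\ast)$, and then quote Theorem \ref{teomonvan}. However, the mechanism you propose for the comparison step --- that $\rho_0(\sigma)$ ``is trivial on a complement of the vanishing cohomology and agrees with $\rho^v_0(\sigma)$ there'' --- is false, and it leads you to false intermediate conclusions. There is no such monodromy-equivariant direct-sum decomposition of $H^d(X_\ast)$: already for a boundary divisor of type $A_1$ in the curve case ($d=1$, non-exceptional), Theorem \ref{teomonvan} gives $\rho^v_0(\sigma)=\mathrm{Id}$ on the rank-one vanishing cohomology, yet the global monodromy on $H^1(X_\ast)$ is the square of a Picard--Lefschetz transvection, $x\mapsto x+2\langle x,\delta\rangle\delta$, which is unipotent but emphatically not the identity when the vanishing cycle $\delta$ is nonzero in $H^1(X_\ast)$. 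So your assembly step ``$\rho_0(\sigma)=\mathrm{Id}$ in the non-exceptional case'' and ``$\rho_0(\sigma)$ is an order-$2$ element in the exceptional case'' are both wrong as stated; only the unipotency assertions of the corollary survive.

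The correct replacement (and what the paper does) is the specialization long exact sequence
$\cdots\to H^d(X_0,\mathbb C)\to H^d(X_\ast,\mathbb C)\to H^d(X^v_\ast,\mathbb C)\to H^{d+1}(X_0,\mathbb C)\to\cdots$
for a disc $S\to B_X$ transverse to $D$, together with the fact that $\pi_1(S^\circ)$ acts trivially on $H^\bullet(X_0)$. This does not split the monodromy; it gives a filtration in which the kernel of $H^d(X_\ast)\to H^d(X^v_\ast)$ carries the trivial action and the image is a submodule of the vanishing cohomology whose cokernel (injecting into $H^{d+1}(X_0)$) also carries the trivial action. Elementary linear algebra then yields exactly the equivalence ``$\rho_0(\sigma)$ unipotent on $H^d(X_\ast)$ if and only if $\rho^v_0(\sigma)$ unipotent on $H^d(X^v_\ast)$,'' which, combined with Theorem \ref{teomonvan}, is precisely what the corollary asserts (the square being unipotent, not the identity, in the exceptional case). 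You partially anticipated this difficulty in your final paragraph, but the invariant-cycle-style splitting you gesture at is not available here; you need the exact sequence, not a decomposition.
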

\begin{proof}
Recall from Proposition \ref{proresdisc} that  the variety $X_b$ parameterized by a generic point $b\in D$ is an ADE variety with a unique singularity of type $T_n$.  In a sufficiently small  analytic neighborhood  of the image of $b$ in $B_X$, the mini-versal deformation $\mathscr X\to B_X$ is a versal deformation of $X_b$. Thus, without loss of generality, we can assume $X$ has a unique singularity of type $T_n$, and $\sigma$ is a generic loop around the main exceptional divisor in the blow-up $\tilde B_X\to B_X'$.  Letting $S\to B_X$ be a smooth morphism of the unit disc $S\subseteq \mathbb C$ sending $0\to b$, we may take $\sigma$ to be the image of a standard loop in $S^\circ$.

The family over $S$ is smooth outside of the fiber $X_0$  and $X_0$ has a unique singular point  $x\in X_0$, which is of type ADE.  We may assume further that the base point $\ast$ is chosen sufficiently close to $0\in S$ so that the fiber $X_\ast$ contains a Milnor fiber $X_\ast^v$ for the singularity.  In particular, note that  $H^d(X_\ast^v,\mathbb Z)$ is the vanishing cohomology of the singularity at $x\in X_0$. In this situation we obtain a long exact sequence of monodromy representations 
(e.g. \cite[pp.V, 7--9]{sgavii1}, \cite[(1.4)]{steenbrink})
\begin{equation}
\label{eqnmonoana}
\ldots \to H^d(X_{0},\mathbb C) \to H^d(X_{\ast},\mathbb C) \to H^d(X_\ast^v,\mathbb C)  \to H^{d+1}(X_{0},\mathbb C)\to \ldots
\end{equation}
and the action of $\pi_1(S^\circ,\ast)$ on  $H^i(X_{0},\mathbb C)$ is trivial (Grothendieck \cite[pp. V,4]{sgavii1}).
With some linear algebra, it follows easily that the monodromy action of $\sigma$ on  $H^d(X_{\ast},\mathbb C)$ is unipotent if and only if the monodromy action of $\sigma$ on the vanishing cohomology $H^d(X_\ast^v,\mathbb C)$  is unipotent.  The result is then a consequence of Theorem \ref{teomonvan}.
\end{proof}

\section{Morphisms to the moduli stack  $\overline{\mathcal M}_g$}\label{secmstack}
In this section we discuss the problem of  lifting the morphism $\tilde B_X\to \overline M_g$ to the moduli stack. 
Our main tool is again the extension theorem of de Jong-Oort \cite[Theorem 5.1]{dejongoort}, which in its full strength (using \cite[Theorem 2.4]{dm}, \cite[VII 1  Cor. 3.8]{sgavii1}) implies:  \emph{Let $S$ be a smooth scheme and $\Delta$ an effective SNC divisor on $S$.  Set $S^\circ=S\setminus \Delta$, and suppose there is a  morphism $\phi^\circ:S^\circ \to \mathcal M_g$.  The monodromy of the associated  (analytic) family is unipotent along a (Zariski) open subset of each irreducible component of $\Delta$ if and only if there is a morphism $\phi:S\to \overline{\mathcal M}_g$ extending $\phi^\circ$}. 
Combining this with the results of the previous section, we get the following.

\begin{cor}\label{teossr1}
Let $X$ be an ADE curve with mini-versal deformation space $B_X$ and discriminant $\Delta_X$.  Let $\tilde B_X\to B_X$ be the canonical log alteration with discriminant $\tilde \Delta_X$.  Let $D$ be the union of the irreducible components of $\tilde \Delta_X$ corresponding to curves with $A_{2n}$ singularities.  Then there is a morphism $(\tilde B_X\setminus D)\to \overline{\mathcal M}_g$ extending the rational map $B_X\dashrightarrow  \overline{\mathcal M}_g$, but this morphism does not extend over $D$.
\end{cor}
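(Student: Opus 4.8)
The plan is to apply the strong form of the de Jong--Oort extension theorem quoted at the beginning of the section, reducing the statement entirely to the monodromy computations already carried out in Section \ref{secmono}. First I would recall that, by Proposition \ref{proresdisc}, $\tilde B_X$ is smooth and $\tilde \Delta_X$ is an SNC divisor; setting $\tilde B_X^\circ = \tilde B_X \setminus \tilde \Delta_X$, the versal family $\mathscr X/B_X$ restricted to (and pulled back to) $\tilde B_X^\circ$ gives a morphism $\tilde B_X^\circ \to \mathcal M_g$. Now let $\tilde \Delta_X = \bigcup_i D_i$ be the decomposition into irreducible components, indexed (by Proposition \ref{proresdisc}) by the irreducible parabolic sub-root systems of $R_X$, with $D_i$ of type $T_{n_i}$, and let $D$ be the union of those $D_i$ with $T_{n_i} = A_{2n}$ for some $n$. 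Over the open set $U = \tilde B_X \setminus D$, the divisor $\tilde \Delta_X \cap U = \bigcup_{D_i \not\subseteq D}(D_i \cap U)$ is still SNC, and by Corollary \ref{teomono} (applied with $d = \dim X = 1$ odd) the monodromy around a generic point of each such component $D_i$ is unipotent, since $D_i$ is of type $T_{n_i}$ with $T_{n_i} \neq A_{2n}$. The de Jong--Oort theorem in the form stated then yields a morphism $U = \tilde B_X \setminus D \to \overline{\mathcal M}_g$ extending $\tilde B_X^\circ \to \mathcal M_g$, and hence extending the rational map $B_X \dashrightarrow \overline{\mathcal M}_g$ (which factors through $\tilde B_X$ up to the generically finite cover, compatibly with Theorem \ref{teoresmm}).

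For the non-extension assertion I would argue by contradiction. Suppose the morphism extends to a morphism $\phi : \tilde B_X \setminus D' \to \overline{\mathcal M}_g$ over some larger open set meeting the interior of a component $D_i \subseteq D$ of type $A_{2n}$. Since $\overline{\mathcal M}_g$ is a separated Deligne--Mumford stack, restricting to a smooth curve $S \subseteq \tilde B_X$ meeting $D_i$ transversally at a general point and avoiding all other components, we would obtain a family of stable curves over $S$ whose monodromy around the puncture is the image of $\sigma$ under the representation on $H^1$; but by Corollary \ref{teomono} this monodromy $\rho_0(\sigma)$ is \emph{not} unipotent (only its square is), contradicting the fact that monodromy of a family of stable curves over a punctured disc is always unipotent (e.g.\ \cite[Theorem 2.4]{dm}, \cite[VII 1 Cor. 3.8]{sgavii1}). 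This is exactly the "only if" direction of the de Jong--Oort theorem, so one can alternatively simply quote that direction. Either way, the obstruction is the failure of unipotency, which by Corollary \ref{teomono} is precisely what happens along the $A_{2n}$ locus and nowhere else.

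The main obstacle, such as it is, is not the logical skeleton but making the reduction to a transverse disc rigorous: one must check that a generic point of $D_i$ really is in the smooth locus of $\tilde \Delta_X$ (so that a transverse slice sees monodromy around a single branch, realizing the loop $\sigma$ of Proposition \ref{promain} and Corollary \ref{teomono}), and that the monodromy seen on such a slice of $\tilde B_X$ agrees with $\rho_0(\sigma)$ rather than with some conjugate or power. Both points follow from Proposition \ref{proresdisc} together with the local analytic description in the proof of Corollary \ref{teomono} — near a general point of $D_i$ the canonical log alteration looks, analytically, like the blow-up $\tilde B_X \to B_X'$ near the main exceptional divisor for the single singularity of type $T_{n_i}$, so a transverse disc maps to a standard loop and the relevant monodromy is $\rho_0(\sigma)$. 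With that in hand the corollary is immediate. A brief remark could be added pointing out that, since $(-\mathrm{Id})$ acts on $H^1$ with eigenvalue $-1$, the monodromy along the $A_{2n}$ components has order exactly $2$ modulo its unipotent part, which foreshadows the $\mathbb Z/2\mathbb Z$ stack structure introduced in the sequel (Corollary \ref{teossr2}).
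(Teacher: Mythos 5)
Your proposal is correct and follows exactly the paper's route: the paper's proof is a one-line citation of Corollary \ref{teomono} together with the strong (``if and only if'') form of the de Jong--Oort extension theorem, which is precisely the reduction you carry out (the paper also notes the alternative of using the Abramovich--Vistoli purity lemma for the positive direction). Your additional care about transverse slices and the contradiction argument for non-extension is just an expansion of the same argument, not a different one.
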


\begin{proof}
This is a direct consequence of Corollary \ref{teomono} and the result of de Jong-Oort \cite[Theorem 5.1]{dejongoort}  mentioned above.  In fact, with the morphism $\tilde B_X\to \overline M_g$, the result can also be deduced from Corollary  \ref{teomono} using the Abramovich--Vistoli purity lemma \cite[Lemma 2.4.1]{av}.
\end{proof}

We now consider resolving completely the map $\tilde B_X\dashrightarrow \overline {\mathcal M}_g$ to obtain a lift of the morphism $\tilde B_X\to \overline M_g$.  
It is natural to consider the minimal modification of $\tilde B_X$ that admits a lift, namely:

\begin{dfn}[Canonical log alteration stack]\label{dfndc}
Let $X$ be an ADE curve 
and let $B_X$ be a mini-versal deformation space of $X$.  Let $\tilde B_X\to B_X$ be the canonical log alteration.
The \textbf{canonical log alteration stack} of the pair $(B_X,\Delta_X)$, denoted $\tilde{\mathcal  B}_X$, is defined as 
$$\tilde{\mathcal  B}_X:= \tilde B_X \times_{\overline M_g} \overline{\mathcal M}_g.$$
We set $\tilde \delta_X$  to be the pre-image of $\tilde \Delta_X$ in $\tilde {\mathcal B}_X$.
\end{dfn}

We are of course interested in describing the morphism $\tilde {\mathcal B}_X\to \tilde B_X$.  

\begin{cor}\label{teossr2}
The space $\tilde {\mathcal B}_X$ is a Deligne-Mumford stack with coarse moduli space $\tilde B_X$, admitting a morphism $\tilde {\mathcal B}_X\to \overline {\mathcal M}_g$ that resolves the rational map $\tilde B_X\dashrightarrow \overline{\mathcal M}_g$.  Moreover, $\tilde {\mathcal B}_X\to B_X$ admits a section outside of $D$, but any such section will fail to extend over $D$.
\end{cor}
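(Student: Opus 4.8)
The plan is to deduce everything from the de Jong--Oort extension theorem (in its full strength, as quoted at the start of this section), together with Corollary \ref{teomono} on the failure of unipotency along the $A_{2n}$ divisors. First I would observe that $\tilde{\mathcal B}_X$ is a fiber product of a scheme with the Deligne--Mumford stack $\overline{\mathcal M}_g$ over the coarse space $\overline M_g$; since $\overline M_g$ is the coarse moduli space of $\overline{\mathcal M}_g$ and $\tilde B_X\to \overline M_g$ is a morphism from a scheme of finite type, the fiber product $\tilde{\mathcal B}_X$ is again a Deligne--Mumford stack, separated and of finite type, and the projection $\tilde{\mathcal B}_X\to\tilde B_X$ is proper and quasi-finite, hence it realizes $\tilde B_X$ as the coarse moduli space of $\tilde{\mathcal B}_X$ (away from $\tilde\Delta_X$ it is an isomorphism, since on $\tilde B_X\setminus D$ the map already lifts by Corollary \ref{teossr1} and on the unobstructed part of $\tilde\Delta_X$ the lift is unique). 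The morphism $\tilde{\mathcal B}_X\to\overline{\mathcal M}_g$ is simply the second projection, and it visibly resolves (i.e. extends) the rational map $\tilde B_X\dashrightarrow\overline{\mathcal M}_g$, because composing with $\overline{\mathcal M}_g\to\overline M_g$ recovers the morphism $\tilde B_X\to\overline M_g$ of Theorem \ref{teoresmm}, and over the generic locus $\tilde B_X\setminus\tilde\Delta_X$ the stack and its coarse space agree.

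Next I would address the statement about sections. Over $\tilde B_X\setminus D$ we already have, by Corollary \ref{teossr1}, a morphism $(\tilde B_X\setminus D)\to\overline{\mathcal M}_g$ extending $B_X\dashrightarrow\overline{\mathcal M}_g$; by the universal property of the fiber product this is exactly the data of a section of $\tilde{\mathcal B}_X\to\tilde B_X$ over $\tilde B_X\setminus D$, and composing with $\tilde B_X\to B_X$ we get the asserted section of $\tilde{\mathcal B}_X\to B_X$ outside $D$. The content is the non-extension: I would argue that \emph{any} section of $\tilde{\mathcal B}_X\to B_X$ over a neighborhood of a generic point of a component $D_{R'}$ of type $A_{2n}$ would give, by composing with the projection to $\overline{\mathcal M}_g$, a lift of the moduli map $\tilde B_X\dashrightarrow\overline{\mathcal M}_g$ regular near that generic point; but by Corollary \ref{teomono} the monodromy $\rho_0(\sigma)$ around $D_{R'}$ is not unipotent (it is the hyperelliptic involution of the tail, cf. Example \ref{exaansr}), so by the de Jong--Oort criterion no such extension of the moduli map exists. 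Hence no section extends over $D$. (This also immediately reproves the last sentence of Corollary \ref{teossr1}: the morphism $(\tilde B_X\setminus D)\to\overline{\mathcal M}_g$ cannot extend over $D$ for the same monodromy reason.)

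The one place requiring a little care — and the step I expect to be the main technical point rather than a formality — is identifying the stack structure of $\tilde{\mathcal B}_X$ along $D$, i.e. checking that the generic $\mathbb Z/2\mathbb Z$-stabilizer claimed in the Main Theorem really is what the fiber product produces. For this I would work étale-locally near a generic point $b\in D_{R'}$ with $R'$ of type $A_{2n}$: by Proposition \ref{proresdisc} the curve there has a unique $A_{2n}$ singularity, so by the local structure of $\overline{\mathcal M}_g$ the stack $\overline{\mathcal M}_g$ near the corresponding stable curve $C_m$ is the quotient of a deformation space by $\operatorname{Aut}(C_m)$, and by Example \ref{exaansr} one has $\operatorname{Aut}(C_m)\cong\mathbb Z/2\mathbb Z$ for general $\tilde b$. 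Pulling back along $\tilde B_X\to\overline M_g$, which is generically étale onto its image transverse to the boundary, gives that $\tilde{\mathcal B}_X$ has generic stabilizer $\mathbb Z/2\mathbb Z$ along $D_{R'}$, with the non-trivial automorphism the hyperelliptic involution of the tail fixing the Weierstrass point of attachment. I would present this as a remark following the proof rather than belabor the étale-local verification, since the essential input — non-unipotency of the monodromy and the description of the tail — has already been established in Section \ref{secmono} and \S\ref{sechassettstable}.

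\begin{proof}
By definition $\tilde{\mathcal B}_X=\tilde B_X\times_{\overline M_g}\overline{\mathcal M}_g$, the fiber product of the scheme $\tilde B_X$, via the morphism of Theorem \ref{teoresmm}, with the Deligne--Mumford stack $\overline{\mathcal M}_g$ over its coarse space $\overline M_g$. A fiber product of a Deligne--Mumford stack with a scheme is again a Deligne--Mumford stack, separated and of finite type; the projection $p:\tilde{\mathcal B}_X\to\tilde B_X$ is proper and quasi-finite. Over the open set $\tilde B_X\setminus\tilde\Delta_X$ the moduli map lands in $\mathcal M_g$, where the stack and its coarse space coincide, so $p$ is an isomorphism there; it follows from the valuative criterion and Cautis \cite[Lemma 2.4]{cautis} (applied to the normal variety $\tilde B_X$) that $\tilde B_X$ is the coarse moduli space of $\tilde{\mathcal B}_X$. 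The morphism $\tilde{\mathcal B}_X\to\overline{\mathcal M}_g$ is the second projection; composing with $\overline{\mathcal M}_g\to\overline M_g$ gives $\tilde B_X\to\overline M_g$, so this morphism resolves the rational map $\tilde B_X\dashrightarrow\overline{\mathcal M}_g$.

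For the statement about sections, Corollary \ref{teossr1} provides a morphism $(\tilde B_X\setminus D)\to\overline{\mathcal M}_g$ lifting the moduli map; by the universal property of the fiber product this is the same as a section of $p$ over $\tilde B_X\setminus D$, and composing with the canonical log alteration $\tilde B_X\to B_X$ yields a section of $\tilde{\mathcal B}_X\to B_X$ away from $D$. Conversely, suppose such a section extended over (a neighborhood of the generic point of) a component $D_{R'}\subseteq D$ of type $A_{2n}$. Composing with $\tilde{\mathcal B}_X\to\overline{\mathcal M}_g$ would then give a morphism from a neighborhood of the generic point of $D_{R'}$ in $\tilde B_X$ to $\overline{\mathcal M}_g$ extending $B_X\dashrightarrow\overline{\mathcal M}_g$. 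By Corollary \ref{teomono}, the monodromy $\rho_0(\sigma)$ of the associated family around $D_{R'}$ is not unipotent. This contradicts the de Jong--Oort extension theorem \cite[Theorem 5.1]{dejongoort} (in the strengthened form quoted above), which requires unipotent monodromy along a Zariski open subset of each boundary component for an extension to $\overline{\mathcal M}_g$ to exist. Hence no section of $\tilde{\mathcal B}_X\to B_X$ extends over $D$. The same monodromy obstruction shows that the morphism $(\tilde B_X\setminus D)\to\overline{\mathcal M}_g$ of Corollary \ref{teossr1} cannot be extended over $D$.
\end{proof}

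\begin{rem}
Near a generic point $b$ of a component $D_{R'}$ of type $A_{2n}$, one can describe the stack structure of $\tilde{\mathcal B}_X$ explicitly. By Proposition \ref{proresdisc} the curve $X_b$ has a unique singularity of type $A_{2n}$, and by Example \ref{exaansr} its stable model $C_m$ satisfies $\operatorname{Aut}(C_m)\cong\mathbb Z/2\mathbb Z$ for $\tilde b$ general, the non-trivial element being the hyperelliptic involution of the tail fixing the Weierstrass point of attachment. Since $\overline{\mathcal M}_g$ is, near $[C_m]$, the quotient of a smooth deformation space by $\operatorname{Aut}(C_m)$, and $\tilde B_X\to\overline M_g$ is transverse to the boundary with the fiber of $\tilde B_X\to B_X$ finite, the fiber product $\tilde{\mathcal B}_X$ acquires a generic $\mathbb Z/2\mathbb Z$-stabilizer along $D_{R'}$. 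This is the ``addition of a stack structure'' along the $A_{2n}$ locus in the statement of the Main Theorem.
\end{rem}
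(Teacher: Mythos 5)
Your proposal is correct and follows essentially the same route as the paper: the paper's own proof is the one-line remark that the corollary is ``essentially a restatement of the previous corollary,'' i.e.\ everything reduces to Corollary \ref{teossr1}, whose content (extension away from $D$ via de Jong--Oort, non-extension over $D$ via the non-unipotent monodromy of Corollary \ref{teomono}) is exactly what you invoke. Your additional verifications (that the fiber product is a Deligne--Mumford stack with coarse space $\tilde B_X$, and the \'etale-local identification of the $\mathbb Z/2\mathbb Z$ stabilizers via Example \ref{exaansr}, cf.\ Remark \ref{remhi}) are consistent with what the paper leaves implicit.
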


\begin{proof}
This is essentially a restatement of the previous corollary.
\end{proof}

\begin{rem}\label{remhi}
Note that the stabilizer group of an object of $\tilde{\mathcal B}_X$ over a point in $\tilde B_X$ is isomorphic to the automorphism group of the associated stable curve.  In particular, at a generic point of an irreducible component of $D$, the stabilizer will include a subgroup isomorphic to $\mathbb Z/2\mathbb Z$.  Indeed from the results of Hassett \cite{hassettstable} (see Example \ref{exaansr}), in the $A_{2n}$ case, the automorphism group of the stable model will contain a $\mathbb Z/2\mathbb Z$ subgroup induced by the hyperelliptic involution of the tails.  
\end{rem}

\begin{rem}\label{remalt} Along these lines, an alternate proof of Corollary  \ref{teossr2}  can be given directly from Theorem \ref{teoresmm} and the results of Hassett \cite{hassettstable}, without the need for the complete monodromy arguments of \S\ref{secmono}. Assume for simplicity that the normalizations of the fibers of $\mathscr X\to B_X$ do not admit automorphisms.  The results of Hassett then imply that the automorphism groups of the stable models are trivial, except in the $A_{2n}$ case, as pointed out in the remark above. Thus $\tilde {\mathcal B}_X$ in fact agrees with $\tilde B_X$ outside of $D$.  It is then possible to make a direct argument that there is no section of $\tilde {\mathcal B}_X\to \tilde B_X$ that extends over $D$.
\end{rem}

 In order to resolve the moduli map while remaining in the category of schemes, one can use the monodromy computation to construct \'etale local branched covers.  For the sake of exposition, let us enumerate the components of $D$, setting $D=D_1+\ldots +D_N$, where the $D_i$ are irreducible.
At a point $b\in \tilde B_X$ where $\ell$ of the $D_i$ meet so that $D$ has a local equation of the form $x_1\ldots x_\ell$, there are two natural choices.  On the one hand, one may take the double cover given by
$t^2-x_1\cdots x_\ell$.  This introduces  a  toric singularity in the base, and  a resolution  can then be described log-\'etale locally using a theorem of  Mochizuki \cite[Theorem A(1)]{mochizuki}.   

An alternative approach is to take a higher degree covering; i.e. the $\mathbb Z_2^\ell$ cover  given by $t_1^2=x_1, \ldots, t_\ell^2=x_\ell$.
Since the base remains smooth, the de Jong--Oort theorem then gives an extension.  At the level of stacks, this \'etale local branch cover has the following description as a root stack.
 
\begin{dfn}[Canonical log alteration root stack]\label{dfnrs}
Let $X$ be an ADE curve 
and let $B_X$ be a mini-versal deformation space of $X$.  Let $\tilde B_X\to B_X$ be the canonical log alteration.
The \textbf{canonical log alteration root stack} of the pair $(B_X,\Delta_X)$, denoted $\tilde{\mathbb   B}_X$, is defined as 
$$
\tilde {\mathbb B}_X= \tilde B_X(\sqrt{D_1})\times_{\tilde B_X}\ldots \times_{\tilde B_X} \tilde B_X(\sqrt{D_N});
$$ 
i.e. the fibered product of the Cadman-Vistoli root stacks.
\end{dfn}

The following describes the relationship among $\tilde {\mathbb B}_X$, $\tilde {\mathcal B}_X$ and $\tilde B_X$.

\begin{cor}\label{corrs}
The space $\tilde {\mathbb B}_X$ is a smooth Deligne-Mumford stack with coarse moduli space $\tilde B_X$, admitting a morphism $\tilde {\mathbb B}_X\to \overline {\mathcal M}_g$, and hence to $\tilde {\mathcal B}_X$,  that resolves the rational map $\tilde B_X\dashrightarrow \overline{\mathcal M}_g$.  Moreover, $\tilde {\mathbb B}_X\to \tilde B_X$ is an isomorphism outside of $D$, and the stabilizer of an object of $\tilde {\mathbb B}_X$ lying over a generic point of an irreducible component of $D$ is isomorphic to $\mathbb Z/2\mathbb Z$.
\end{cor}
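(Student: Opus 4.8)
The plan is to treat Corollary \ref{corrs} as a packaging of three inputs: the standard formalism of Cadman--Vistoli root stacks, the monodromy computation of \S\ref{secmono} (specifically Corollary \ref{teomono}), and the de Jong--Oort extension theorem recalled at the start of \S\ref{secmstack}. All the structural assertions about $\tilde{\mathbb B}_X$ are formal consequences of root stack theory once one observes that $\tilde\Delta_X$ is SNC (Proposition \ref{proresdisc}), and the existence of the morphism to $\overline{\mathcal M}_g$ is an \'etale-local application of de Jong--Oort that is made possible by the fact that the $A_{2n}$ monodromy obstruction is of order two.

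First I would record the structural properties. By Proposition \ref{proresdisc} the divisor $\tilde\Delta_X$ is SNC on the smooth variety $\tilde B_X$, so each irreducible component $D_i$ of $D$ is smooth and the $D_i$ cross normally. Hence each $\tilde B_X(\sqrt{D_i})$ is a smooth Deligne--Mumford stack with coarse moduli space $\tilde B_X$, isomorphic to $\tilde B_X$ over $\tilde B_X\setminus D_i$, with stabilizer $\mu_2\cong\mathbb Z/2\mathbb Z$ at the generic point of $D_i$. Taking the fibered product over $\tilde B_X$, the stack $\tilde{\mathbb B}_X$ is, \'etale-locally near a point where components $D_{i_1},\dots,D_{i_\ell}$ of $D$ meet with local equations $x_1,\dots,x_\ell$, the quotient $[\tilde U/\mathbb Z_2^\ell]$, where $\tilde U\to U$ is the Kummer cover $t_j^2=x_j$ ($j=1,\dots,\ell$). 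Since $\tilde U$ is smooth with SNC boundary and $\tilde U/\mathbb Z_2^\ell=U$, it follows that $\tilde{\mathbb B}_X$ is a smooth Deligne--Mumford stack, its coarse space is obtained by gluing the $U$'s, i.e.\ it is $\tilde B_X$, it is an isomorphism over $\tilde B_X\setminus D$, and at the generic point of a component $D_i$ --- which by genericity and the SNC hypothesis lies on no other component of $D$ and on no other component of $\tilde\Delta_X$ --- only a single root is extracted, so the stabilizer there is $\mathbb Z/2\mathbb Z$.

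Next I would build the morphism to $\overline{\mathcal M}_g$ \'etale-locally and descend it. Over $\tilde B_X\setminus D$ we already have the family of stable curves of Corollary \ref{teossr1}, and $\tilde{\mathbb B}_X$ agrees with $\tilde B_X$ there. On a Kummer chart $\tilde U$ as above, the family of smooth curves over $\tilde U^\circ:=\tilde U\setminus(\text{boundary})$ has, along the pullback of a component $D_{i_j}$, monodromy equal to the \emph{square} of the corresponding monodromy on $U$, since a loop about $\{t_j=0\}$ maps to the square of a loop about $\{x_j=0\}$; by Corollary \ref{teomono} this square is unipotent, while along the pullback of any boundary component of $\tilde\Delta_X$ not contained in $D$ the monodromy is already unipotent by the same corollary. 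The de Jong--Oort theorem therefore yields a morphism $\tilde U\to\overline{\mathcal M}_g$ extending $\tilde U^\circ\to\mathcal M_g$. This extension is unique, because $\overline{\mathcal M}_g$ is a separated Deligne--Mumford stack and $\tilde U^\circ$ is dense in $\tilde U$; by uniqueness it is $\mathbb Z_2^\ell$-equivariant, hence descends to a morphism $[\tilde U/\mathbb Z_2^\ell]\to\overline{\mathcal M}_g$, and the morphisms on overlapping charts agree (again by uniqueness). They glue to $\tilde{\mathbb B}_X\to\overline{\mathcal M}_g$, which over the preimage of $\tilde B_X\setminus D$ is the given family; that is, it resolves the rational map $\tilde B_X\dashrightarrow\overline{\mathcal M}_g$.

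Finally, to factor through $\tilde{\mathcal B}_X=\tilde B_X\times_{\overline M_g}\overline{\mathcal M}_g$, I would compare the two maps $\tilde{\mathbb B}_X\to\overline M_g$ given by composing $\tilde{\mathbb B}_X\to\overline{\mathcal M}_g$ with the coarse map, respectively by $\tilde{\mathbb B}_X\to\tilde B_X\to\overline M_g$: they agree on the dense open complement of $D$, hence everywhere since $\overline M_g$ is separated, so the universal property of the fibered product supplies $\tilde{\mathbb B}_X\to\tilde{\mathcal B}_X$. The only genuinely non-formal input in all of this is the unipotency of the pulled-back monodromy on the Kummer covers, and that is exactly the payoff of Corollary \ref{teomono} (the obstruction along the $A_{2n}$ divisors has order two, so square roots suffice and nothing is needed elsewhere); the remaining difficulty is the bookkeeping of checking equivariance and gluing of the local extensions, which I expect to be the part requiring the most care even though it is routine.
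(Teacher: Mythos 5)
Your proposal is correct and follows essentially the same route as the paper: the structural assertions are standard root-stack facts, and the morphism to $\overline{\mathcal M}_g$ is obtained exactly as in the text by passing to the \'etale-local Kummer covers $t_j^2=x_j$, where Corollary \ref{teomono} guarantees the pulled-back monodromy is unipotent so that de Jong--Oort applies. The only (minor) difference is in the descent step: the paper invokes the Abramovich--Vistoli purity lemma to descend the extension from the \'etale presentation to the stack, whereas you argue via separatedness of $\overline{\mathcal M}_g$ and uniqueness of extensions over a dense open, which accomplishes the same thing.
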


 \begin{proof}
The only thing to establish is the morphism $\tilde {\mathbb B}_X\to \overline {\mathcal M}_g$.  The discussion above gives an extension of the \'etale presentation of the stack.  One then uses the Abramovich-Vistoli purity lemma (see e.g. \cite[Proposition 3.4]{kl10}) to show that this descends to a morphism from the stack. 
\end{proof}

\section{Explicit semi-stable reduction in the $A_n$ case}\label{secandirect}
In the $A_n$ case one can construct by hand the  simultaneous semi-stable reduction of Corollary \ref{teossr2}. The computation is essentially done in  Tjurina \cite[\S3]{tjurina0}, Brieskorn \cite{bruber} and in Fedorchuk  \cite[\S 5]{maksym} in slightly different co-ordinates.  We give a detailed account of the  computation here in the notation used in this paper.  We note that the computation can be done for higher dimensional singularities, and over an algebraically closed field $k=\bar k$ with $\operatorname{char}(k)=p>n+1$ (we need to avoid $p| m$ for any $m\le n+1$). We also discuss the monodromy obstruction to resolving the map to the moduli stacks in the $A_2$ case.

 \subsection{The Weyl cover}\label{secexpwc}
 The fundamental theorem of symmetric functions gives a natural $W(A_n)\cong\Sigma_{n+1}$ covering space of affine space:
 $$
 \operatorname{Spec} k[a_1,\ldots,a_{n+1}]=\mathbb A^{n+1}_k\stackrel{/\Sigma_{n+1}}\to \mathbb A^{n+1}_k= \operatorname{Spec} k[t_1,\ldots,t_{n+1}]
 $$
defined by 
$
t_i\mapsto \sigma_i(\underline a) \ \ i=1,\ldots,n+1
$, 
where $\sigma_i$ is the standard symmetric polynomial in $n+1$ variables, of degree $i$. 
 We can interpret this cover as mapping the set of all possible roots  mapping to the set of all monic polynomials of degree $n+1$ by
$$
(a_1,\ldots,a_{n+1})\mapsto \prod_{i=1}^{n+1} (x-a_i)=x^{n+1}+t_1x^{n}+\ldots+t_{n+1}.$$
The pull-back of the discriminant is then given by $
\prod_{i<j}(a_i-a_j)^2$.
By the fundamental theorem of symmetric functions, this can be described as a function of the basic symmetric polynomials:
$$
\prod_{i<j}(a_i-a_j)^2=D(\sigma_1(\underline a),\ldots,\sigma_{n+1}(\underline a)).
$$
The discriminant is thus given as $D(t_1,\ldots,t_{n+1})$, and pulls back via the Weyl cover to the product of hyperplanes.

\subsection{The mini-versal deformation space}\label{secexpdef}
Now let us consider the mini-versal deformation space of an $A_n$ singularity.  Since the singularity is given as
$$
X=\{x_1^{n+1}+x_2^2+\ldots+x_m^2=0\}\subseteq \mathbb A^m_k=\operatorname{Spec}k[x_1,\ldots,x_m],
$$
it is well known (e.g. \cite[Theorem 7.9, Corollary 7.10, Example 7.17]{vistolici}) that  the mini-versal deformation $\mathscr X\to B$ is given (on the level of rings) by
$$
k[t_2,\ldots,t_{n+1}]\to\frac{k[t_2,\ldots,t_{n+1}][x_1,\ldots,x_m]}{\left((x_2^2+\ldots+x_m^2)+x_1^{n+1}+t_{2}x_1^{n-1}+\ldots+t_{n+1}\right)}.
$$
Setting $f(x_1)=
 x_1^{n+1}+t_{2}x_1^{n-1}+\ldots+t_{n+1}$, we can see that the discriminant locus is exactly the set of $(t_2,\ldots,t_{n+1})$ where $f$ has a double root.  In other words, the base of the deformation is given by $\operatorname{Spec} k[t_1,\ldots,t_{n+1}]/(t_1)$, and the discriminant of the deformation is given by $\operatorname{Spec} k[t_1,\ldots,t_{n+1}]/(t_1,D(t_1,\ldots,t_{n+1}))$.

\subsection{The pull-back family on the Weyl cover $B'\to B$}\label{secexpweylfam}
There is an induced Weyl cover of the mini-versal deformation space.  
 $$
 \operatorname{Spec} k[a_1,\ldots,a_{n+1}]/(a_1+\ldots +a_{n+1})=B' \stackrel{/\Sigma_{n+1}}\to B= \operatorname{Spec} k[t_1,\ldots,t_{n+1}]/(t_1)
 $$
The pull-back of the discriminant is then given by
$$
k[a_1,\ldots,a_{n+1}]/(a_1+\ldots+a_{n+1},\prod_{1\le i<j\le {n+1}} (a_i-a_j)^2);
$$
i.e. the pull-back of the discriminant is a  union of hyperplanes on $B'$.  We can then consider $B'\times_B \mathscr X \to B'$, the pull-back of the family $\mathscr X\to B$.  On the level of rings, the family is given by
$$
k[a_1,\ldots,a_{n+1}]/(\sigma_1(\underline a))\to \frac{k[a_1,\ldots,a_{n+1}][x_1,\ldots,x_m]}{\left(\sigma_1(\underline a),(x_2^2+\ldots+x_m^2)+\prod_{i=1}^{n+1}(x_1-a_i)\right)}.
$$

\subsection{The wonderful blow-up of the discriminant: $\tilde B\to B'$}
\label{secexpblowupbase}
We will focus on the blow-up of the highest codimension stratum of the discriminant.  The rest follows inductively.  Consider the blow-up at the origin of $\operatorname{Spec}k[a_1,\ldots,a_{n+1}]$.  We will work with the standard coordinate patch 
$$\operatorname{Spec}k[b_1,\ldots,b_{n+1}]\to \operatorname{Spec}k[a_1,\ldots,a_{n+1}]$$
given by
$(b_1,\ldots,b_{n+1})\mapsto (b_1,b_1b_2,\ldots,b_1b_{n+1})$; the modifications for the other patches follow by symmetry.  Let $\tilde B\to B'$ be the blow-up at the origin, and let $\tilde B\times _{B} \mathscr X\to \tilde B$ be the pull-back of 
$B'\times_B\mathscr X\to B'$.
 On the level of rings, the family is given by
$$
k[b_1,\ldots,b_{n+1}]/(\sigma_1(1,b_2,\ldots,b_{n+1}))\to$$
\begin{equation}\label{eqnfambar}
 \frac{k[b_1,\ldots,b_{n+1}][x_1,\ldots,x_m]}{\left(\sigma_1(1,b_2,\ldots,b_{n+1}),(x_2^2+\ldots+x_m^2)+(x_1-b_1)\prod_{i=2}^{n+1}(x_1-b_1b_i)\right)}.
\end{equation}
Note that the exceptional divisor in $\tilde B$ is given locally by $b_1=0$, and that over this locus, there is a family of $A_n$ singularities.

\subsection{Blowing-up the total space ($n$ odd): extending the family over the generic point of the exceptional divisor in $\tilde B$}\label{secexpblowuptotalodd}
The locus of $A_n$ singularities  in the total space of $\tilde B \times_B {\mathscr X}$ is given by the ideal $(b_1,x_1,\ldots,x_m)$.  We would like to resolve these singularities.  To do this, we perform a blow-up 
along the ideal
$$
I=((b_1,x_1)^{\frac{n+1}{2}},x_2,\ldots,x_m).
$$
To be precise, define $\tilde {\mathscr X}\to \tilde B$ to be the blow-up of $\tilde B\times_B {\mathscr X}$ along the ideal $I$.  In the case of curves, one can easily check that the fiber over the generic point of the exceptional divisor in $\tilde B$ is a nodal curve.  More generally, in any dimension, the fiber consists of two irreducible components: the desingularization of the original fiber, sitting inside of 
$Bl_{(x_1^{\frac{n+1}{2}},x_2,\ldots,x_m)}\mathbb A^m_k$ (i.e. the weighted blow-up with weights $(\frac{n+1}{2},1\dots,1)$), and a smooth subvariety called a tail, sitting inside of $\mathbb P(1,1,\frac{n+1}{2},\ldots,\frac{n+1}{2})$.  
In the case of curves, the tail is a smooth hyperelliptic curve of genus $\lfloor n/2 \rfloor$ in $\mathbb P(1,1,\frac{n+1}{2})$.   Note these tails and their ambient space agree with the concrete examples in Hassett \cite[\S 6.2]{hassettstable}.

\subsection{Blowing-up the total space ($n$ even): extending the family over the generic point of the exceptional divisor in a $\mathbb Z/2\mathbb Z$ cover of $\tilde B$}\label{secexpblowuptotaleven}
Again, the locus of $A_n$ singularities  in the total space of $\tilde B \times_B {\mathscr X}$ is given by the ideal $(b_1,x_1,\ldots,x_m)$.  We would like to resolve these singularities.  
By virtue of the monodromy computations, we know that in order to do this we must first take $\tilde B'\to \tilde B$ a $2:1$ cover of the base branched along the exceptional divisor $\{b_1=0\}$.  On the level of rings, we have the branched cover of $\tilde B$ given by
$$
k[b_1,\ldots,b_{n+1}]\to k[c_1,\ldots,c_{n+1}]
$$
via $b_1\mapsto c_1^2$, and $b_i\mapsto c_i$ for $2=1,\ldots,n+1$. The total family after the base change to $\tilde B'$ is given by

$$
k[c_1,\ldots,c_{n+1}]/(\sigma_1(1,c_2,\ldots,c_{n+1}))\to$$
\begin{equation}\label{eqnfambareven}
 \frac{k[c_1,\ldots,c_{n+1}][x_1,\ldots,x_m]}{\left(\sigma_1(1,c_2,\ldots,c_{n+1}),(x_2^2+\ldots+x_m^2)+(x_1-c_1^2)\prod_{i=2}^{n+1}(x_1-c_1^2c_i)\right)}.
\end{equation}
Note that the divisor in $\tilde B'$ given locally by $\{c_1=0\}$ corresponds to the exceptional divisor in $\tilde B$, and that over this locus in $\tilde B'$, there is a family of $A_n$ singularities.

To obtain the semi-stable reduction, we perform a blow-up along the ideal
$$
I=\left((c_1^2,x_1)^{n+1}, (c_1^{n+1},c_1^{n-1}x_1,\ldots,c_1x_1^{n/2})\cdot(x_2,\ldots,x_m), \{x_ix_j\}_{2\le i,j\le m}\right).
$$
To be precise, define $\tilde {\mathscr X}'\to \tilde B'$ to be the blow-up of $\tilde B'\times_B {\mathscr X}$ along the ideal $I$.  In the case of curves, it is easy to check that the fiber over the generic point of the exceptional divisor in $\tilde B'$ is a nodal curve.  More generally, in any dimension, the fiber consists of two irreducible components: the desingularization of the original fiber, sitting inside of $Bl_{(x_1^{n+1},\{x_ix_j\}_{2\le i,j\le m})}\mathbb A^m_k$ and a smooth subvariety called a tail, sitting inside of $\mathbb P(1,2,n+1,\ldots,n+1)$.  In the case of curves, the tail is a smooth hyperelliptic curve of genus $n/2$ in $\mathbb P(1,2,n+1)$.  Note these tails and their ambient space agree with the concrete examples in Hassett \cite[\S 6.2]{hassettstable}.

\subsection{The monodromy obstruction in the $A_2$ case}\label{secnegex}
To better understand the fact that there exists an extension $\tilde B_X\to \overline{M}_g$ at the level schemes (Thm. \ref{teoresmm}), but not at the level of stacks in the  $A_{2n}$ case (Cor. \ref{teossr1}),  we discuss the $A_2$ case explicitly. 
  Thus, let $X$ be a cuspidal elliptic curve, and $\mathscr X\to B_X$ a mini-versal deformation.  Consider the family $\mathscr X'\to B_X'$ obtained via the Weyl cover, and the restriction  $(\sX')_{\mid L}\to L$ of this family to a generic arc $L$ through the origin in $B_X'$.

To show that there is no extension $\tilde B_X\to \overline{\mathcal M}_g$ to the moduli stack, it suffices to show that this restriction $(\sX')_{\mid L}\to L$ does not extend to a stable family of curves.  To see this, we observe that in the notation of the blow-up in \S \ref{secexpblowupbase}, the restriction $(\sX')_{\mid L}\to L$ is a surface $Z_{b_2}$ with equation (locally near the $A_2$ singularity):
\begin{equation}\label{fam3}
x_2^2+x_1^3-(b_2^2+b_2+1) b_1^2 x_1-b_2(1+b_2) b_1^3 =0,
\end{equation}
fibered in curves over $L$; here we view $b_1$ as a parameter for $L$ and $b_2$ as a fixed slope.
    
  The surface $Z_{b_2}$ has a $D_4$ singularity at the origin, which is also a cusp singularity for $X_0=X$, the central fiber of $Z_{b_2}$ viewed as a family of curves.  Recall the standard resolution of a $D_4$ surface singularity $x^2=f_3(y,z)$ is given by $4$ blow-ups. Namely,  after first blowing-up the $D_4$ singularity, giving the exceptional divisor $E_0$, the $D_4$ singularity ``splits'' into three $A_1$ singularities corresponding to the three roots of $f_3$. Then a simple blow-up  of each $A_1$ singularity  introduces the exceptional divisors $E_1,E_2,E_3$, giving the desired resolution.  We associate to this a  $\widetilde{D}_4$ graph (consisting of $E_0$ the central vertex, to which one attaches edges connecting the $4$ vertices corresponding to the curves $X_0$, $E_1$, $E_2$ and $E_3$).
 
At this point we can identify the monodromy obstruction via the theory of elliptic fibrations. Namely, the equation (\ref{fam3}) is a local equation near the $A_2$ singularity of the (generic) one-parameter deformation of a cusp. Thus, it can be used to compute the monodromy on the vanishing cohomology. On the other hand, (\ref{fam3}) can be viewed as giving a one parameter degeneration of elliptic curves. From the $\widetilde D_4$ graph, we conclude that this is a type $I_0^*$ degeneration in Kodaira's classification \cite[\S V.7, p.201]{barth}. It follows that the monodromy on the vanishing cohomology is minus the identity  (see \cite[p.210]{barth}).
 One should compare this also with the discussion of the elliptic involution from \cite[Ch. 2A]{hm}.

On the other hand, the extension to the moduli scheme is  without problems. Simply note that on $E_0$ there are $4$ distinct special points, corresponding to the intersections with 
$X_0$, $E_1,\dots E_3$. Thus, the one-parameter family $(\sX')_{\mid L}\to L$ determines a well defined tail $T$ and curve $[\tilde X\cup T]\in \overline{M}_g$ (see \S\ref{sechassettstable}). In fact, with respect to an appropriate choice of coordinates on $E_0\cong \bP^1$, the four special points on $E_0$ can be taken to be $\infty,1, b_2, -(1+b_2)$ respectively. It follows that the exceptional divisor of $\tilde B_X\to B_X'$ over the $A_2$ locus can be identified with the $\lambda$-line. Thus, at the level of tails, the moduli map $\tilde B_X\to \overline{M}_g$ can be described as mapping the $\lambda$-line to the $j$-line (see \cite[\S2.A]{hm} for a discussion). However, even though there exists a family of tails over the $\lambda$-line
$y^2=x(x-1)(x-\lambda)$,
they do not fit together to give a family of curves over  $\tilde B_X$  because of the elliptic involution ($y\to -y$)  acting on the tails. 

\bibliography{resolution}
\end{document}